\newcommand{\GL}{{\rm GL}} \newcommand{\Tr}{{\rm Tr}}
\newcommand{\G}{{\rm G}} \newcommand{\T}{{\rm T}}
\newcommand{\K}{{\rm K}}
\newcommand{\la}{\lambda} \newcommand{\al}{\alpha} \newcommand{\be}{\beta} \newcommand{\ga}{\gamma} \newcommand{\de}{\delta}
\newcommand{\bbC}{\mathbb{C}} \newcommand{\bbN}{\mathbb{N}}
\newcommand{\bbZ}{\mathbb{Z}}
\newcommand{\bbD}{\mathbb{D}} 
\newcommand{\mcH}{\mathcal{H}}
\newcommand{\mcA}{\mathcal{A}}
\newcommand{\mcB}{\mathcal{B}}
\newcommand{\mcO}{\mathcal{O}}
\newcommand{\bft}{{\bm{t}}} \newcommand{\bfd}{{\bm{d}}}
\newcommand{\bfmu}{{\bm{\mu}}} \newcommand{\bfn}{{\bm{n}}}
\newcommand{\bfX}{{\bm{X}}}
\newcommand{\bfx}{{\bm{x}}} \newcommand{\bfy}{{\bm{y}}}
\newcommand{\fg}{\mathfrak{g}}
\newcommand{\fgl}{\mathfrak{gl}}
\newcommand{\fh}{\mathfrak{h}}
\newcommand{\zg}{\mathfrak{z(g)}}
\newcommand{\ot}{\otimes}
\renewcommand{\c}[2]{c^{#1}_{#2}}
\newcommand{\F}[2]{{\rm F}^{#1}_{#2}}
\newcommand{\form}[1]{\left \langle #1 \right \rangle}
\newcommand{\dual}[1]{{\left( #1 \right)}^*}
\begin{document}

\titlerunning{Sums of squares of Littlewood-Richardson coefficients}

\title*{Sums of squares of Littlewood-Richardson coefficients and
$\GL_n$-harmonic polynomials}

\author{Pamela E. Harris \and Jeb F. Willenbring}
\institute{Pamela E. Harris \at Marquette University, MSCS Department \\
P.O. Box 1881, Milwaukee WI 53201 \\
\email{pamela.harris@marquette.edu}
\and Jeb F. Willenbring \at
University of Wisconsin-Milwaukee, Dept. of Math. Sciences \\
P.O. Box 0413, Milwaukee, WI 53211 \\ \email{jw@uwm.edu} }

\maketitle

\abstract{We consider the example from invariant theory concerning the conjugation action of the general linear group on several copies of the $n \times n$ matrices, and examine a symmetric function which stably describes the Hilbert series for the invariant ring with respect to the  multigradation by degree.  The terms of this Hilbert series may be described as a sum of squares of Littlewood-Richardson coefficients.  A ``principal specialization'' of the gradation is then related to the Hilbert series of the $\K$-invariant subspace in the $\GL_n$-harmonic polynomials (in the sense of Kostant), where $\K$ denotes a block diagonal embedding of a product of general linear groups.  We also consider other specializations of this Hilbert series.
\footnote{This research was supported by the National Security Agency grant \# H98230-09-0054.}
}

\medskip
\centerline\obeylines{{\noindent \emph{To Nolan Wallach, who has influenced generations of mathematicians.}}}
\medskip

\section{Introduction}
\label{sec:Introduction}

We let $\F{\la}{n}$ denote the finite dimensional irreducible representation of the general linear group, $\GL_n$, (over $\bbC$) whose highest weight is indexed by the integer partition $\la = (\la_1 \geq \la_2 \geq \cdots \geq \la_n )$ (in standard coordinates).  Given a finite sequence of integer partitions ${\bfmu} = (\mu^{(1)}, \cdots, \mu^{(m)})$, we will let $\c{\la}{\bfmu}$ be the multiplicity of $\F{\la}{n}$ in the $m$-fold tensor product
$\F{\mu^{(1)}}{n} \ot \cdots \ot \F{\mu^{(m)}}{n}$ under the diagonal action of $\GL_n$.  That is, $\c{\la}{\bfmu}$ denotes a (generalized) Littlewood-Richardson coefficient.

We remark that the usual exposition of Littlewood-Richardson coefficients (see \cite{Fulton, GW, Howe-Lee-1, Howe-Lee-2, HTW, Macdonald}) concerns the case where $m=2$.  However, by iterating the Littlewood-Richardson rule (or its equivalents) one obtains several  effective combinatorial interpretations of our $\c{\la}{\bfmu}$.

The subject of this exposition concerns some interpretations of the positive integer
$\sum {\left( \c{\la}{\mu} \right)}^2$ where the sum is over certain finite subsets of non-negative integer partitions.  We believe that such sums have under-appreciated combinatorial significance.  For example, one immediately observes the very simple specialization to the case where $\mu^{(j)} = (1)$ for all $j=1,\cdots,m$, in which case the sum of squares is $m!$, which may be viewed as a consequence of Schur-Weyl duality.  More generally, if $\nu = (\nu_1 \geq \cdots \geq \nu_m \geq 0)$ is a partition and $\mu^{(j)} = (\nu_j)$, then $\c{\la}{\bfmu}$ is equal to the Kostka number $K_{\la \nu}$ (i.e. the multiplicity of the weight indexed by $\nu$ in $\F{\la}{n}$).

Our motivation for considering these numbers comes from invariant theory.  On one hand, we consider the conjugation action of the general linear group on several copies of the $n \times n$ matrices.  On the other hand, we consider the $\K$-conjugation action on one copy of the $n\times n$ matrices, where $\K$ denotes a block diagonal embedding of a product of general linear groups.  These problems are related, and have been studied extensively.  We make no attempt to survey the literature, but recommend \cite{Drensky}.

Central to this work is the notion of a \emph{Hilbert series}.  Let $V$ be a graded vector space.  That is, $V = \bigoplus_{d=0}^\infty V_d$ where $V_d$ is a finite dimensional subspace.  The Hilbert series, $\sum_{d=0}^\infty (\dim V_d) q^d$, formally records the dimensions of the graded components.  Here $q$ is an indeterminate.  We also consider multivariate generalizations corresponding to situations where $V$ is graded by a cone in a lattice.

In our setting, $V$ will be a space of invariant polynomial functions on $m$ copies of the $n \times n$ matrices.  For fixed values of the parameters, the Hilbert series is the Taylor expansion of a rational function around zero.  When these parameters are small, one can expect to write down the numerator and denominator explicitly.  These polynomials encode structural information about the invariants.  However, as the size of the matrix becomes large, these rational functions are difficult to compute explicitly.  This motivates re-organizing the data by studying the coefficients of the Hilbert series of a fixed degree as the size of the matrix goes to infinity.  The limit exists.   The formal series recording this information will be referred to as the \emph{stable Hilbert series}.

Certain sums of squares of Littlewood-Richardson coefficients describe the coefficients of the (stable) Hilbert series for the invariant algebra in each case.  These Hilbert series, stably, may be expressed as a product.  Furthermore, a ``principal specialization'' of this product is then related to the Hilbert series of the $\K$-invariant subspace in the $\GL_n$-harmonic\footnote{Harmonic in the sense of Kostant (see \cite{Kostant}), which generalizes the usual notion of a harmonic polynomial.} polynomials.

Unless otherwise stated, we will only need notation for representations with polynomial matrix coefficients, which are indexed by partitions with \emph{non-negative} integer components.  The sum of the parts of a partition $\nu$ will be called the \emph{size} (denoted $|\nu|$), while the number of parts will be called the \emph{length} denoted $\ell(\nu)$.  As usual, we will also write $\la \vdash d$ to mean $|\la| = d$.  Furthermore, we also adapt the (non-standard) notation that if $\bfmu = (\mu^{(1)}, \cdots, \mu^{(m)})$ is a finite sequence of partitions, we set $|\bfmu| = \sum_{j=1}^m |\mu^{(j)}|$, and write $\bfmu \vdash d$ to mean $|\bfmu|=d$.  From a combinatorial point of view, the results involve a specialization of the following

\begin{theorem}[Main Formula]  Let $t_1, t_2, t_3, \cdots$ denote a countably infinite set of indeterminates.  We have
\[
    \prod_{k=1}^\infty \frac{1}{1- \left(t_1^k + t_2^k + t_3^k + \cdots \right)}  = \sum_\la \sum_\bfmu {\left( \c{\la}{\bfmu} \right)}^2 \bft^\bfmu
\]
where the outer sum is over all partitions $\la$ and the inner sum is over all finite sequences of partitions $\bfmu = (\mu^{(1)}, \mu^{(2)}, \mu^{(3)}, \cdots)$ with $\bft^\bfmu = t_1^{|\mu^{(1)}|} t_2^{|\mu^{(2)}|} t_3^{|\mu^{(3)}|} \cdots$.
\end{theorem}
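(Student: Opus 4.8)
The plan is to interpret the inner sum over $\la$ via the Hall inner product on the ring $\Lambda$ of symmetric functions and then to evaluate the resulting generating function in power-sum coordinates. Recall that under the (stable) identification of the character of $\F{\la}{n}$ with the Schur function $s_\la$ one has $s_{\mu^{(1)}}s_{\mu^{(2)}}\cdots=\sum_\la\c{\la}{\bfmu}\,s_\la$, and that the Hall inner product $\langle\,\cdot\,,\cdot\,\rangle$ (for which the $s_\la$ are orthonormal) satisfies $\langle p_\rho,p_\sigma\rangle=z_\rho\,\delta_{\rho\sigma}$, where $z_\rho=\prod_i i^{m_i}m_i!$ and $m_i$ is the number of parts of $\rho$ equal to $i$. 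By Parseval's identity, $\sum_\la(\c{\la}{\bfmu})^2=\langle\prod_k s_{\mu^{(k)}},\prod_k s_{\mu^{(k)}}\rangle$, so the right-hand side of the Main Formula equals
\[
    S\;:=\;\sum_\bfmu\bft^\bfmu\,\Big\langle\textstyle\prod_k s_{\mu^{(k)}},\ \prod_k s_{\mu^{(k)}}\Big\rangle.
\]
Since everything in sight is graded by $|\bfmu|$ (equivalently by total degree in $\bft$), these are honest identities of formal power series and the rearrangements below are legitimate; I would flag this at the outset, and also note that it lets me freely interchange the two outer summations.

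The first step is the elementary ``doubled Cauchy'' observation: introducing a second alphabet $\widetilde Z$ alongside $Z=(z_1,z_2,\dots)$, one has for any $f\in\Lambda$ that $\langle f,f\rangle=\langle f(Z)\,f(\widetilde Z),\ \sum_\la s_\la(Z)s_\la(\widetilde Z)\rangle_{Z,\widetilde Z}$, the pairing being the Hall inner product applied in $Z$ and in $\widetilde Z$; this is immediate from the Schur expansion of $f$ together with $\sum_\la s_\la(Z)s_\la(\widetilde Z)=\prod_{i,j}(1-z_i\widetilde z_j)^{-1}=\exp\!\big(\sum_{n\ge1}\tfrac1n p_n(Z)p_n(\widetilde Z)\big)$. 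Taking $f=\prod_k s_{\mu^{(k)}}$ and summing against $\bft^\bfmu$, the colour-by-colour Cauchy identity $\sum_\mu t^{|\mu|}s_\mu(Z)s_\mu(\widetilde Z)=\prod_{i,j}(1-t z_i\widetilde z_j)^{-1}$ gives
\[
    \sum_\bfmu\bft^\bfmu\prod_k s_{\mu^{(k)}}(Z)\,s_{\mu^{(k)}}(\widetilde Z)
    \;=\;\prod_{k\ge1}\prod_{i,j}\frac{1}{1-t_k z_i\widetilde z_j}
    \;=\;\exp\!\Big(\sum_{n\ge1}\frac{p_n(\bft)}{n}\,p_n(Z)\,p_n(\widetilde Z)\Big),
\]
where $p_n(\bft)=t_1^n+t_2^n+\cdots$; the last equality simply collects, over all colours $k$, the monomials $t_k^n$.

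Now expand both $\exp\!\big(\sum_n\tfrac{p_n(\bft)}{n}p_n(Z)p_n(\widetilde Z)\big)$ and $\exp\!\big(\sum_n\tfrac1n p_n(Z)p_n(\widetilde Z)\big)$ in the power-sum basis, using $\exp\!\big(\sum_n\tfrac{a_n}{n}p_n(Z)p_n(\widetilde Z)\big)=\sum_\rho\tfrac{a_\rho}{z_\rho}\,p_\rho(Z)p_\rho(\widetilde Z)$ over all partitions $\rho$, with $a_\rho=\prod_i a_i^{m_i}$ (so $a_\rho=p_\rho(\bft)$ when $a_n=p_n(\bft)$). Since $\langle p_\rho(Z)p_\rho(\widetilde Z),\,p_\sigma(Z)p_\sigma(\widetilde Z)\rangle_{Z,\widetilde Z}=z_\rho^2\,\delta_{\rho\sigma}$, all cross terms vanish and
\[
    S\;=\;\sum_\rho\frac{p_\rho(\bft)}{z_\rho}\cdot\frac{1}{z_\rho}\cdot z_\rho^2
    \;=\;\sum_\rho p_\rho(\bft)
    \;=\;\prod_{k\ge1}\sum_{m\ge0}p_k(\bft)^m
    \;=\;\prod_{k\ge1}\frac{1}{1-\big(t_1^k+t_2^k+\cdots\big)},
\]
which is the left-hand side of the Main Formula.

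The individual computations are routine symmetric-function bookkeeping; the one point that genuinely needs care is the formal framework. Because there are infinitely many colours, I would want to confirm that the grading by $|\bfmu|$ really does make each displayed rearrangement — the interchange of $\sum_\bfmu$ with the inner product, the factorization of the generating function over colours, and the passage to the exponential/plethystic form of the Cauchy product — an identity of well-defined formal power series rather than a merely symbolic manipulation.
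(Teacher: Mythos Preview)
Your proof is correct and follows essentially the same route as the paper: both compute the Hall scalar product
\[
\form{\,\prod_{k\ge1}\prod_{i,j}\frac{1}{1-x_i y_j t_k}\,,\ \prod_{i,j}\frac{1}{1-x_i y_j}\,}
\]
once via the Schur-function form of the Cauchy identity (giving $\sum_{\la,\bfmu}(\c{\la}{\bfmu})^2\bft^\bfmu$) and once via the power-sum form (giving $\sum_\rho p_\rho(\bft)=\prod_{k}(1-p_k(\bft))^{-1}$). Your presentation is slightly more streamlined---starting from Parseval and passing directly through the exponential form of the Cauchy kernel---whereas the paper first carries out a separate combinatorial expansion of the product side to the intermediate form $\sum_\bfmu z_{\cup_j\mu^{(j)}}/\prod_j z_{\mu^{(j)}}\,\bft^\bfmu$ and then matches it to the power-sum computation, but the essential argument is the same.
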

\begin{proof} See Section \ref{sec:main_formula}. \qed
\end{proof}

As an application of the main formula, we turn to the space, $\mcH(\fgl_n)$, of $\GL_n$-harmonic polynomial functions on the adjoint representation (with its natural gradation) by polynomial degree, $\mcH(\fgl_n) = \bigoplus_{d = 0}^\infty \mcH^d(\fgl_n)$.
The group $\GL_n$ acts on $\mcH(\fgl_n)$.  Note that the constant functions are the only $GL_n$-invariant harmonic functions.  However, if $\K$ is a reductive subgroup of $\GL_n$, the space of $\K$-invariant functions is much larger.  Consider the example when the group $\K$ is the block diagonally embedded copy of $\GL_{n_1} \times \cdots \times \GL_{n_m}$ in $\GL_n$, with $n_1+\cdots+n_m=n$. We will denote this group by $\K(\bfn)$ where $\bfn = (n_1, \cdots, n_m)$.  The purpose of this paper is to relate the dimension of the $\K(\bfn)$-invariant polynomials in $\mcH^d(\fgl_n)$ to a sum of squares of Littlewood-Richardson coefficients.  See Theorem \ref{thm:main} for the precise statement.

We consider this question because the related algebraic combinatorics are particularly elegant, and hence have expository value in connecting harmonic analysis with algebraic combinatorics.  However, this example is the tip of an iceberg.  Indeed, one can replace $\GL_n$ with any algebraic group G (with $\fg = Lie(G)$) and $\K(\bfn)$ with any subgroup of $G$.  This area of investigation is wide open and well motivated as an examination of the special symmetries of harmonic polynomials.

In Section \ref{sec:Hesselink}, we describe a general ``answer'' to this question when K is a symmetric subgroup of a reductive group G.  This answer is not as explicit as would be desired, but applies to any symmetric pair (G,K).  The remainder of the paper is related to the $\G = \GL_n$ example with $\K = \K(\bfn)$.  Note that when $m=2$ (i.e. $n = (n_1, n_2)$) the pair (G,K) is symmetric, but for $m > 2$ is not.  We remark that in the $m=2$ case, the results presented here were first described in \cite{Willenbring}.  Our present discussion amounts to a generalization to $m>2$.

After setting up appropriate notation in Section \ref{sec:Preliminaries} we provide an interpretation for a description of the Hilbert series of the $\K(\bfn)$-invariants in the $\GL_n$-harmonic polynomials on $\fgl_n$ in Sections \ref{sec:GLn-invariants} and \ref{sec:GLn-harmonics}.  Chief among these involves sums of squares of Littlewood-Richardson coefficients.  We recall other combinatorial interpretations in Section \ref{sec:combinatorics}.  These interpretations involve counting the conjugacy classes in the general linear group over a finite field.

\bigskip
\noindent {\bf Acknowledgements}  The first author wishes to thank Marquette University for support during the preparation of this article.  The second author wishes to thank the National Security Agency for support.  We also would like to thank both Lindsey Mathewson and the referee for pointing out several references that improved the exposition.

\section{The case of a symmetric pair}\label{sec:Hesselink}

Let G denote a connected reductive linear algebraic group over the complex numbers and let $\fg$ be its complex Lie algebra.  We have $\fg = \zg \oplus \fg_{ss}$, where $\zg$ denotes the center of $\fg$, while $\fg_{ss} = [\fg, \fg]$ denotes the semisimple part of $\fg$.  A celebrated result of Kostant (see \cite{Kostant}) is that the polynomial functions on $\fg$, denoted $\bbC[\fg]$, are a free module over the invariant subalgebra, $\bbC[\fg]^G$, under the adjoint action.  Choose a Cartan subalgebra $\fh$ of $\fg$, and let $\Phi$ and $W$ denote the corresponding set of roots and Weyl group, respectively.  Choose a set of positive roots $\Phi^+$, and let $\Phi^- = - \Phi^+$ denote the negative roots.  Set $\rho = \frac{1}{2} \sum_{\al \in \Phi^+} \al$.  For $w \in W$, let $l(w)$ denote the number of positive roots sent to negative roots by $w$.  Fix an indeterminate $t$.  There exist positive integers $e_1 \leq e_2 \leq \cdots \leq e_r$ such that $\sum_{w \in W} t^{l(w)} = \prod_{j=1}^r \frac{1-t^{e_j}}{1-t}$ where $r$ is the rank of $\fg_{ss}$.  A consequence of the Chevalley restriction theorem (\cite{Chevalley}) is that $\bbC[\fg]^G$ is freely generated, as a commutative ring, by  $\dim \zg$ polynomials of degree 1, and $r$ polynomials of degree $e_1, \cdots, e_r$. These polynomials are the \emph{basic invariants}, while $e_1, \cdots, e_r$ are called the \emph{exponents} of $G$.

\subsection{Harmonic polynomials}\label{subsec:Harmonics}

We define the \emph{harmonic polynomials} on $\fg$ by
\[
    \mcH_\fg = \left\{ f \in \bbC[\fg] |  \Delta(f) = 0 \mbox{ for all } \Delta \in \bbD[\fg]^G \right\}
\] where $\bbD[\fg]^G$ is the space of constant coefficient G-invariant differential operators on $\fg$.  In \cite{Kostant}, it is shown that as a G-representation $\mcH_\fg$ is equivalent to the $G$-representation algebraically induced from the trivial representation of a maximal algebraic torus, $\T$, in $\G$.  Thus, by Frobenius reciprocity the irreducible rational representations of $\G$ occur with multiplicity equal to the dimension of their zero weight space.  Moreover, as a representation of G, the harmonic polynomials are equivalent to the regular functions on the nilpotent cone in $\fg$.

The harmonic polynomials inherit a gradation by degree from $\bbC[\fg]$.  Set $\mcH_\fg^d = \mcH_\fg \cap \bbC[\fg]_d$.  Thus, $\mcH_\fg = \bigoplus_{d=0}^\infty \mcH_\fg^d$.  We next consider the distribution of the multiplicity of an irreducible G-representation among the graded components of $\mcH_\fg$.  A solution to this problem was originally due to Hesselink \cite{Hesselink}, which we recall next.

Let $\wp_t:\fh^* \rightarrow \bbN$ denote\footnote{As always, $\bbN = \{0,1,2,3,\cdots$\}, the non-negative integers.} Lusztig's q-analog of Kostant's partition function.  That is $\wp_t$ is defined by the equation,
\[
    \prod_{\alpha \in \Phi^+} \frac{1}{1 - t e^\alpha} = \sum_{\xi \in Q(\fg, \fh)} \wp_t(\xi) e^\xi
\] where $Q(\fg,\fh) \subseteq \fh^*$ denotes the lattice defined by the integer span of the roots.  As usual, $e^\xi$ denotes the corresponding character of $\T$, with $Lie(\T) =\fh$.  As usual, we set $\wp(\xi) = 0$ for $\xi \notin Q(\fg, \fh)$.

Let $P(\fg)$ denote the integral weights corresponding to the pair ($\fg, \fh$).  The dominant integral weights corresponding to $\Phi^+$ will be denoted $P_+(\fg)$.  Let $L(\la)$ denote the (finite dimensional) irreducible highest weight representation with highest weight $\la \in P_+(\fg)$.  The multiplicity of $L(\la)$ in the degree $d$ harmonic polynomials, $\mcH^d(\fg)$, will be denoted by $m_d(\la)$.  Set $m_\la(t) = \sum_{d=0}^\infty m_d(\la) t^d$.  Hesselink's theorem asserts that
\[
    m_\la(t) = \sum_{w \in W} (-1)^{l(w)} \wp_t( w(\la + \rho) - \rho).
\]  See \cite{Wallach-Willenbring} for a generalization of this result.

We remark that the above formula is very difficult to implement in practice.  This is in part due to the fact that the order of $W$ grows exponentially with the Lie algebra rank.  Thankfully, only a small number of terms actually contribute to the overall multiplicity.  See \cite{Harris} for a very interesting special case where the number of contributed terms is shown to be a Fibonacci number.

\subsection{The K-spherical representations of G}

Let (G,K) be a symmetric pair.  That is, G is a connected reductive linear algebraic group over $\bbC$ and $\K = \{ g \in \G| \theta(g) = g \}$, where $\theta$ is a regular automorphism of G of order two.  Since $\K$ will necessarily be reductive the quotient, $\G/\K$ is an affine variety and the $\bbC$-algebra of regular function $\bbC[\G/\K]$ is multiplicity free as a representation of $G$.  This fact follows from the (complexified) Iwasawa decomposition of $G$.  Put another way, there exists $S \subseteq P_+(\fg)$ such that for all $\la \in P_+(\fg)$ we have
\[ \dim L(\la)^\K = \left\{
     \begin{array}{ll}
       1, & \hbox{$\la \in S$;} \\
       0, & \hbox{$\la \not\in S$.}
     \end{array}
   \right.
\]  Note that the subset $S$ may be read off of the data encoded in the Satake diagram associated to the pair (G,K).   The above fact implies that the Hilbert series $H_t(G,K) = \sum_{d=0}^\infty h_d t^d$ with $h_d = \dim \mcH^d(\fg)^\K$ has the following formal expression
\[
    H_t(G,K) = \sum_{w \in W} (-1)^{l(w)}
    \left( \sum_{\la \in S} \wp_t(w(\la+\rho)-\rho) \right).
\]  This formula seems rather encouraging.  Unfortunately, the inner sum is very difficult to put into a closed form for general $w \in W$.  This is, in part, a reflection of the fact that the values of $\wp_t$ cannot be determined from a ``closed form'' expression.  However, note that $w(\la + \rho)-\rho$ often falls outside of the support of $\wp_t$, and therefore it may be possible to obtain explicit results along these lines.  Moreover, the point of this exposition is to advertise that combinatorially elegant expressions may exist.  At least this is the case for the pair ($\GL_{n_1+n_2}, \GL_{n_1} \times \GL_{n_2}$), as we shall see.

\section{Preliminaries}\label{sec:Preliminaries}

We let $\fgl_n$ denote the complex Lie algebra of $n \times n$ matrices with the usual bracket, $[X, Y] = XY - YX$, for $X, Y \in \fgl_n$.  Let $E_{ij}$ denote the $n \times n$ matrix with a 1 in row $i$ and column $j$, and a 0 everywhere else.  The Cartan subalgebra will be chosen to be the span of $\{E_{ii} | 1 \leq i \leq n \}$.  The dual basis in $\fh^*$ to $(E_{11}, E_{22}, \cdots, E_{nn})$ will be denoted $(\epsilon_1, \cdots, \epsilon_n)$.  Choose the simple roots as usual, $\Pi = \{\epsilon_i - \epsilon_{i+1} | 1 \leq i < n \}$.  Let $\Phi$ (resp. $\Phi^+$) denote the roots (resp. positive roots).  We will identify $\fh$ with $\fh^*$ using the trace form, $(H_1, H_2) = \Tr(H_1 H_2)$ (for $H_1, H_2 \in \fh)$.
The fundamental weights are $\omega_i = \sum_{j=1}^i \epsilon_j \in \fh^*$ for $1 \leq i \leq n-1$.  We also set $\omega_n = \sum_{j=1}^n \epsilon_j \in \fh^*$.  Let $P(\GL_n) = \sum_{j=1}^n \bbZ \omega_j$, and $P_+(\GL_n) = \bbZ \omega_n + \sum_{j=1}^{n-1} \bbN \omega_j$.

From this point on, we will write $(a_1, \cdots, a_n)$ for $\sum a_i \epsilon_i$.  Thus, we have $\la = (\la_1,\cdots,\la_n) \in P_+(\GL_n)$ iff each $\la_i$ in an integer and $\la_1 \geq \cdots \geq \la_n$.  The finite dimensional, irreducible representation of $\GL_n$ with highest weight $\la$ will be denoted $(\pi_\la, \F{\la}{n})$ where
\[ \pi_\la : \GL_n \rightarrow \GL(\F{\la}{n}).\]
To simply notation will write $\F{\la}{n}$ for $(\pi_\la, \F{\la}{n})$.

Throughout this article, the representations of $\GL_n$ which we will consider have polynomial matrix coefficients.  Thus the components of the highest weight $\la$ will be non-negative integers.  Therefore, if $\la$ is a (non-negative integer) partition with at most $\ell$ parts ($\ell \leq n$), then the $n$-tuple, $(\la_1, \cdots, \la_\ell, 0, \cdots, 0)$, corresponds to the highest weight of a finite dimensional irreducible representation of $\GL_n$ (with polynomial matrix coefficients).

Under the diagonal action, $\GL_n$ acts on the $d$-fold tensor product $\otimes^d \bbC^n$.  Schur-Weyl duality (see \cite{GW} Chapter 9) asserts that the full commutant to the $\GL_n$-action is generated by the symmetric group action defined by permutation of factors.  Consequently, one has a multiplicity free decomposition with respect to the joint action of $\GL_n \times S_d$.  Moreover, if $n \geq d$ then every irreducible representation of $S_d$ occurs.  The irreducible representation of $S_d$ paired with $\F{\la}{n}$ will be denoted by $V^{\la}_{d}$. The full decomposition into irreducible $GL_n \times S_d$-representation is
\[
    {\bigotimes}^d \bbC^n \cong \bigoplus_\la \; \F{\la}{n} \otimes V^{\la}_{m}
\] where the sum is over all non-negative integer partitions, $\la$, of size $d$ and length at most $n$.  Note that when $n = d$, then the condition on $\ell(\la)$ is automatic.  Thus, all irreducible representations of $V^{\la}_d$ occur.  In this manner, the highest weights of $\GL_n$-representations provide a parametrization of the $S_d$-representations.

\subsection{Littlewood-Richardson coefficients}

Let $\bfd = (d_1, \cdots, d_m)$ denote a tuple of positive integers with $d=d_1+\cdots+d_m$.  Let $S_{\bfd}$ denote the subgroup of $S_d$ consisting of permutations that stabilize the sets permuting the first $m_1$ indices, then the second $m_2$ indices etc.  Clearly, we have
\[
    S_\bfd \cong S_{d_1} \times \cdots \times S_{d_m}.
\] The irreducible representations of $S_\bfd$ are of the form
\[
    V(\bfmu) = V^{\mu^{(1)}}_{d_1} \otimes \cdots \otimes V^{\mu^{(m)}}_{d_m}
\] where $\mu^{(j)}$ is a partition of size $d_j$.  It is well known that if an irreducible representation, $V^{\la}_d$, of $S_d$ is restricted to $S_\bfd$ then the multiplicity of $V(\bfmu)$ in $V^{\la}_d$ is given by the Littlewood-Richardson coefficient $\c{\la}{\bfmu}$. This fact is a consequence of Schur-Weyl duality.

\section{Invariant polynomials on matrices}\label{sec:GLn-invariants}
A permutation of $\{1,2, \cdots, m \}$ may be written as a product of disjoint cycles.  This result is fundamental to combinatorial properties of the symmetric group, $S_m$.  Keeping this elementary fact in mind, let $\bfX = (X_1, X_2, \cdots, X_m)$ be a list of complex $n \times n$ matrices.  Let $\Tr$ denote the trace of a matrix and define
\begin{equation}
    \Tr_\sigma(\bfX) =   \Tr(X_{\sigma^{(1)}_1} X_{\sigma^{(1)}_2} \cdots X_{\sigma^{(1)}_{m_1}})
        \; \Tr(X_{\sigma^{(2)}_1} X_{\sigma^{(2)}_2} \cdots X_{\sigma^{(2)}_{m_2}})   \cdots
           \Tr(X_{\sigma^{(k)}_1} X_{\sigma^{(k)}_2} \cdots X_{\sigma^{(k)}_{m_k}})
\end{equation} where
$\sigma =
(\sigma^{(1)}_1 \sigma^{(1)}_2 \cdots \sigma^{(1)}_{m_1})
(\sigma^{(2)}_1 \sigma^{(2)}_2 \cdots \sigma^{(2)}_{m_2})
\cdots
(\sigma^{(k)}_1 \sigma^{(k)}_2 \cdots \sigma^{(k)}_{m_k})$ is a permutation of $m = \sum m_i$ written as a product of $k$ disjoint cycles.  Observe that the cycles of $\sigma$ may be permuted and rotated without changing the permutation.  In turn, the function $\Tr_\sigma$ displays these same symmetries.

Chief among the significance of $\Tr_\sigma$ is the fact that they are invariant under the conjugation action
$
    g \cdot (X_1, \cdots, X_m) = (g X_1 g^{-1}, \cdots, g X_m g^{-1}),
$   where $g \in \GL_n$.
By setting some of the components of $(X_1, \cdots, X_m)$ equal one defines a polynomial of equal degree but on fewer than $m$ copies of $M_n$.  Intuitive, this fact may be described by allowing equalities in the components of the cycles of $\sigma$.  That is to say (formally) we consider $\sigma$ up to conjugation by Levi subgroup of $S_m$.

In \cite{Procesi-AMS, Procesi-AIM}, C. Procesi described these generators for the algebra of $\GL_n$-invariant polynomials, denoted\footnote{Here we let $M_n$ denote the complex vector spaces of $n \times n$ matrices (with entries from $\bbC$).} $\bbC[V]^{\GL_n}$, on $V = M_n^m$, and provided a proof that these polynomials span the invariants.  Hilbert tells us that the ring of invariants must be finitely generated.  Thus, there must necessarily be algebraic relations among this (infinite) set of generators.  In light of Procesi's work, these generators and relations are understood.  However, recovering the Hilbert series from these data is not automatic.

In order to precisely quantify the failure of the $\Tr_\sigma$ being independent, we introduce the formal power series $A_n(\bft) = A_n(t_1, t_2, \cdots, t_m) = \sum a_n(\bfd) \bft^\bfd$ with
\footnote{We will use the notation $\bfd = (d_1, \cdots, d_m)$, $\bft^\bfd = t_1^{d_1} \cdots t_m^{d_m}$.} the coefficient defined as  $a_n(\bfd) = \dim \bbC[M_n^m]^{\GL_n}_\bfd$ where $\bbC[M_n^m]_\bfd = \bbC[M_n \oplus \cdots \oplus M_n]_{(d_1, \cdots, d_m)}$ denotes the homogeneous polynomials of degree $d_i$ on the $i$th copy of $M_n$.  The multivariate series, $A_n(\bft)$, is called the \emph{Hilbert Series} of the invariant ring.  Except in some simple cases, a closed form for $A_n(\bft)$ is not known.  Part of the present exposition is to point out the rather simple fact that $a_n(\bfd)$ may be expressed in terms of the squares of Littlewood-Richardson coefficients.  Furthermore, we prove

\begin{proposition}\label{prop_mat_diag}  For any natural numbers $m$ and $\bfd = (d_1, \cdots, d_m)$ \\ the limit
$\lim_{n \rightarrow \infty} a_n(\bfd)$ exists.  If we call the limiting value $a(\bfd)$ and set $\widetilde A(\bft) = \sum a(\bfd) \bft^\bfd$ then
\[
    \widetilde A(\bft) = \prod_{k=1}^\infty \frac{1}{1- (t_1^k + t_2^k + \cdots + t_m^k) }.
\]
\end{proposition}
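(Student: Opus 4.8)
The plan is to compute $a_n(\bfd)$ representation-theoretically, observe that it stabilizes as $n \to \infty$, and then recognize the resulting generating function as a specialization of the Main Formula. First I would decompose $\bbC[M_n^m]_\bfd$ under the conjugation action. Under conjugation $M_n$ is the adjoint representation $\fgl_n \cong \bbC^n \ot (\bbC^n)^*$, which is self-dual via the trace form, so $M_n^* \cong \bbC^n \ot (\bbC^n)^*$ as well; Cauchy's identity (the $\GL_n \times \GL_n$-decomposition of $S^d$ of a tensor product) then gives $\bbC[M_n]_d \cong \bigoplus_{|\la| = d,\ \ell(\la)\le n} \F{\la}{n} \ot \dual{\F{\la}{n}}$. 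Taking the $m$-fold tensor product over the copies of $M_n$ and regrouping, the multidegree-$\bfd$ component of $\bbC[M_n^m]$ is $\bigoplus_{\bfmu} E_\bfmu \ot \dual{E_\bfmu}$, where $E_\bfmu = \F{\mu^{(1)}}{n} \ot \cdots \ot \F{\mu^{(m)}}{n}$ and $\bfmu$ runs over $m$-tuples of partitions with $|\mu^{(i)}| = d_i$ and $\ell(\mu^{(i)}) \le n$. The $\GL_n$-invariants in $E_\bfmu \ot \dual{E_\bfmu}$ form $\mathrm{End}_{\GL_n}(E_\bfmu)$, of dimension $\sum_\la (\c{\la}{\bfmu})^2$ by Schur's lemma, since $\c{\la}{\bfmu}$ is by definition the multiplicity of $\F{\la}{n}$ in $E_\bfmu$. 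Hence $a_n(\bfd) = \sum_{\bfmu}\sum_\la (\c{\la}{\bfmu})^2$ over this range; this is the ``simple fact'' advertised just before the statement.

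Next I would observe that for $n \ge d := d_1 + \cdots + d_m$ the conditions $\ell(\mu^{(i)}) \le n$ hold automatically, every $\la$ occurring in $E_\bfmu$ has $|\la| = d \le n$ so no term is discarded, and each $\c{\la}{\bfmu}$ equals its stable (combinatorial) value. Thus $a_n(\bfd)$ is independent of $n$ once $n \ge d$, the limit $a(\bfd)$ exists, and it equals $\sum_{\bfmu}\sum_\la (\c{\la}{\bfmu})^2$ with $\bfmu$ now ranging over all finite sequences of partitions of the prescribed sizes. Summing over $\bfd$ and using $\bft^\bfd = t_1^{|\mu^{(1)}|}\cdots t_m^{|\mu^{(m)}|}$ whenever $\bfmu \vdash \bfd$ gives $\widetilde A(\bft) = \sum_{\bfmu}\sum_\la (\c{\la}{\bfmu})^2\, t_1^{|\mu^{(1)}|}\cdots t_m^{|\mu^{(m)}|}$, which is precisely the right-hand side of the Main Formula after setting $t_{m+1} = t_{m+2} = \cdots = 0$. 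The Main Formula then yields $\widetilde A(\bft) = \prod_{k=1}^\infty \big(1 - (t_1^k + \cdots + t_m^k)\big)^{-1}$.

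The decomposition and stability steps are routine; the substance lies in the Main Formula (proved in Section~\ref{sec:main_formula}), which I expect to be the genuine obstacle, and for completeness I would sketch its proof directly. Writing $\c{\la}{\bfmu} = \form{s_{\mu^{(1)}}\cdots s_{\mu^{(m)}},\ s_\la}$ for the Hall pairing on the ring $\Lambda$ of symmetric functions, $\widetilde A(\bft)$ is the graded trace of the operator $T_\bft = \phi \circ D_\bft \circ \phi^*$ on $\Lambda$, where $\phi \colon \Lambda^{\ot m} \to \Lambda$ is multiplication, $\phi^*$ is its Hall-adjoint (the iterated coproduct), and $D_\bft$ rescales a multihomogeneous tensor $f_1 \ot \cdots \ot f_m$ by $t_1^{\deg f_1}\cdots t_m^{\deg f_m}$. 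Each of these maps is an algebra homomorphism, hence so is $T_\bft$; since the power sum $p_n$ is primitive and homogeneous of degree $n$, one gets $T_\bft(p_n) = (t_1^n + \cdots + t_m^n)\, p_n$, so $T_\bft$ is diagonalized by the power-sum basis with eigenvalue $\prod_k (t_1^k + \cdots + t_m^k)^{m_k(\la)}$ on $p_\la$; summing these eigenvalues over all partitions $\la$ gives $\prod_{k \ge 1}\sum_{r \ge 0}(t_1^k + \cdots + t_m^k)^r = \prod_{k \ge 1}\big(1-(t_1^k+\cdots+t_m^k)\big)^{-1}$. The only points that demand care are the self-duality of the adjoint action used in the first step and the bialgebra bookkeeping that makes $T_\bft$ multiplicative.
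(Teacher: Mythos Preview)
Your proof of the proposition is correct and follows the same route as the paper: decompose $\bbC[M_n]$ via the $\GL_n\times\GL_n$ ``Peter--Weyl'' (Cauchy) decomposition, tensor up to $\bbC[M_n^m]$, extract the diagonal invariants as $\sum_\la(\c{\la}{\bfmu})^2$, observe stability once $n\ge d$, and invoke the Main Formula with $t_j=0$ for $j>m$. Your packaging via $\mathrm{End}_{\GL_n}(E_\bfmu)$ and Schur's lemma is a cosmetic variant of the paper's ``$\al=\be$'' argument.

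Your appended sketch of the Main Formula, however, differs from the paper's proof in Section~\ref{sec:main_formula}. There the product is first expanded combinatorially (geometric series and multinomial theorem) to reach $\sum_\bfmu z_{\cup\mu^{(j)}}\big/\prod_j z_{\mu^{(j)}}\,\bft^\bfmu$, and then a Hall scalar product of two Cauchy kernels is evaluated separately in the Schur and power-sum bases. You instead realize the right-hand side as the trace of the single operator $T_\bft=\phi\circ D_\bft\circ\phi^*$ on $\Lambda$, use that $\Lambda$ is a commutative bialgebra to see that $T_\bft$ is an algebra endomorphism, and read off its eigenvalues on the power-sum generators from the primitivity of $p_n$. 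This is shorter and more conceptual: the explicit $z_\la$ bookkeeping is absorbed into the single observation $T_\bft(p_n)=(t_1^n+\cdots+t_m^n)\,p_n$. The paper's route, by contrast, makes the intermediate quantity $z_{\cup\mu^{(j)}}/\prod_j z_{\mu^{(j)}}$ visible, which feeds directly into the necklace and conjugacy-class interpretations of Section~\ref{sec:combinatorics}.
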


\begin{proof}
The polynomial functions on $M_n$ are multiplicity free under the action of $\GL_n \times \GL_n$ given by $(g_1, g_2)f(X) = f(g_1^{-1} X g_2)$ for $g_1, g_2 \in \GL_n$, $X \in M_n$ and $f \in \bbC[M_n]$.  The decomposition is a ``Peter-Weyl'' type, $\bbC[M_n] \cong \bigoplus \dual{\F{\la}{n}} \ot \F{\la}{n}$, where the sum is over all non-negative integer partitions $\la$ with $\ell(\la) \leq n$.  We have $\bbC[\oplus_{i=1}^m M_n] \cong \otimes_{i=1}^m \bbC[M_n]$.  Thus
\begin{eqnarray*}
    \bbC[M_n^m] &\cong&
      \left(  \bigoplus_{\mu^{(1)}} \dual{\F{\mu^{(1)}}{n}} \ot \F{\mu^{(1)}}{n}  \right)
           \ot \cdots \ot
      \left(  \bigoplus_{\mu^{(m)}} \dual{\F{\mu^{(m)}}{n}} \ot \F{\mu^{(m)}}{n}  \right) \\
    & \cong & \bigoplus_{\al, \be} \left( \sum_{\bfmu = (\mu^{(1)},\cdots,\mu^{(m)})}
        \c{\al}{\bfmu} \c{\be}{\bfmu}\right) \; \dual{\F{\al}{n}} \ot \F{\be}{n},
\end{eqnarray*}
with respect to the $\GL_n \times \GL_n$ action on the diagonal of $M_n^m$.  Note that in multidegree $(d_1, \cdots, d_m)$ the sum is over all $\bfmu$ with $|\bfmu^{(j)}| = d_j$.
If we restrict to the subgroup $\{ (g,g) : g \in \GL_n\}$ of $\GL_n \times \GL_n$, we obtain an invariant exactly when $\al = \be$.  The dimension of the $\GL_n$-invariants in the degree $d$ homogeneous polynomials on $M_n^m$ is therefore $\sum \left( \c{\la}{\bfmu} \right)^2$ where the sum is over all $\bfmu = (\mu^{(1)}, \cdots, \mu^{(m)})$ and $\la \vdash m$ with length at most $n$.  The degree $d$ component decomposes into multi-degree components $(d_1, \cdots, d_m)$ with $d = \sum d_j$.

If $d \geq n$, then the condition that $\ell(\la) \leq n$ is automatic, and this fact implies that if $\c{\la}{\bfmu} \neq 0$, then $\ell(\mu^{(j)})\leq n$ for all $j$.  Thus, the dimension of the degree $d$ invariants in $\bbC[M_n^m]$ is $\sum \left( \c{\la}{\bfmu} \right)^2$ where the sum is over \emph{all} partitions of size $d$.  If we specialize the main formula so that $t_j = 0$ for $j > m$ then the sums of squares of Littlewood-Richardson coefficients agree with $\widetilde A(\bft)$. \qed
\end{proof}

For our present purposes, we will specialize the multigradation on the invariants in $\bbC[M_n^m]$ to one that is more coarse.  From this process, we can relate the stabilized Hilbert series of the invariants in $\mcH[M_n]$.  This specialization will be the subject of the next section.

\bigskip
We now turn to another problem that, we shall see, is surprisingly related.  Consider the $n \times n$ complex matrix
\[
    X = \left[
          \begin{array}{cccc}
            X(1,1) & X(1,2) & \cdots & X(1,m) \\
            X(2,1) & X(2,2) & \cdots & X(2,m) \\
            \vdots & \vdots & \ddots & \vdots \\
            X(m,1) & X(m,2) & \cdots & X(m,m) \\
          \end{array}
        \right]
\] where $X(i,j)$ is an $n_i \times n_j$ complex matrix with $n = \sum n_j$.  Define
\[
    \Tr^\sigma(X) = \prod_{j=1}^k \Tr \left(
        X( \sigma^{(j)}_1,     \sigma^{(j)}_2 )
        X( \sigma^{(j)}_2,     \sigma^{(j)}_3 )
        X( \sigma^{(j)}_3,     \sigma^{(j)}_4 ) \cdots
        X( \sigma^{(j)}_{m_j}, \sigma^{(j)}_1 ) \right).
\]  Let $\K(\bfn)$ denote the block diagonal subgroup of $\GL_n$ of the form
\[
\K(\bfn) = \left[\begin{array}{ccc}
\GL_{n_1} &        &            \\
          & \ddots &            \\
          &        & \GL_{n_m}
\end{array}\right].
\]  The group $\K(\bfn)$ acts on $M_n$ by restricting the adjoint action of $\GL_n$.  The $\K(\bfn)$-invariant subring of $\bbC[M_n]$ (denoted $\bbC[M_n]^{\K(\bfn)}$) is spanned by $\Tr^\sigma(X)$.  For small values of the parameter space, these expressions are far from linearly independent (as in the last example).  Formally, one cannot help but notice the symbolic map $\Tr^\sigma \mapsto \Tr_\sigma$.  We will try next to make a precise statement along these lines.

Define $\bbC[M_n]_d$ to be the homogeneous degree $d$ polynomials on $M_n$, and let $\bbC[M_n]^{\K(\bfn)}_d$ denote the $\K(\bfn)$-invariant subspace.
Set $a^{(\bfn)}(d) = \dim \bbC[M_n]^{\K(\bfn)}_d$, and $A^{(\bfn)}(t) = \sum_{d=0}^\infty a^{(\bfn)}(d) \; t^d$.  Analogous to Proposition \ref{prop_mat_diag}, we have

\begin{proposition}\label{prop_mat_block}  For any non-negative integer $d$, the limit
\[
    \lim_{n_1 \rightarrow \infty} \lim_{n_2 \rightarrow \infty}
    \cdots \lim_{n_m \rightarrow \infty} a^{(n_1, \cdots, n_m)}(d)
\] exists.  Denote the limiting value $a(d)$ and set $A(t) = \sum_{d=0}^\infty a(d) t^d$.  We have
\[
    A(t) = \prod_{k=1}^\infty \frac{1}{1- m \; t^k}
\]
\end{proposition}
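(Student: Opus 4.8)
The plan is to compute the Hilbert series of $\bbC[M_n]^{\K(\bfn)}$ by decomposing $\bbC[M_n]$ as a $\K(\bfn) \times \K(\bfn)$-module and then restricting to the diagonal. Writing $M_n = \bigoplus_{i,j} \mathrm{Hom}(\bbC^{n_j}, \bbC^{n_i})$ (the blocks $X(i,j)$), I would first observe that $\bbC[M_n] \cong \bigotimes_{i,j} \bbC[\mathrm{Hom}(\bbC^{n_j},\bbC^{n_i})]$, and apply the same ``Peter--Weyl'' decomposition used in Proposition \ref{prop_mat_diag} to each factor: $\bbC[\mathrm{Hom}(\bbC^{n_j},\bbC^{n_i})] \cong \bigoplus_{\nu} \dual{\F{\nu}{n_i}} \ot \F{\nu}{n_j}$, where the sum runs over partitions with $\ell(\nu) \leq \min(n_i, n_j)$. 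Taking invariants under the diagonally embedded $\K(\bfn)$ amounts to asking, for each of the $m$ ``rows'' $i$, that the tensor of all $\GL_{n_i}$-representations coming into block-column $i$ (the covariant factors $\F{\nu}{n_j}$ with $\cdots$) pair nontrivially with the tensor of all $\GL_{n_i}$-representations going out of block-row $i$ (the contravariant factors $\dual{\F{\nu}{n_i}}$). So the dimension of the invariants in a fixed multidegree (one degree variable $t_{ij}$ per block) is a product over $i$ of sums of products of Littlewood--Richardson coefficients.

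The key simplification in the stable range is that, exactly as in Proposition \ref{prop_mat_diag}, once we take $n_i \to \infty$ for all $i$ the length restrictions $\ell(\nu) \leq \min(n_i,n_j)$ become vacuous in each fixed degree, and the matching at each vertex $i$ becomes a clean statement: summing over all intermediate partitions the product of an LR coefficient for the ``incoming'' data and one for the ``outgoing'' data. I expect that after setting all $t_{ij} = t$ (the coarse grading by total degree) these vertex sums collapse, via the same combinatorial identity that proves the Main Formula, into a product. Concretely, I would show that the generating function factors over the $m$ vertices, each vertex contributing a copy of $\prod_{k=1}^\infty \frac{1}{1 - t^k}$ worth of ``internal'' freedom, but with the edges identified so that the total count is $\prod_{k=1}^\infty \frac{1}{1 - m\,t^k}$. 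The cleanest route is probably to recognize $\Tr^\sigma$ for $\sigma \in S_d$ as a spanning set and to show that in the stable limit the relations among the $\Tr^\sigma$ are exactly those among the $\Tr_\sigma$ on $M_n^m$ with $m$ colors available at each of $d$ positions — i.e. transport the problem to a known limit by the symbolic correspondence $\Tr^\sigma \leftrightarrow \Tr_\sigma$ hinted at in the text, then invoke Proposition \ref{prop_mat_diag} with $m$ replaced appropriately.

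More precisely, I would argue that $a^{(\bfn)}(d)$ counts, stably, the number of ways to form a closed-up product of trace words of total length $d$ in which each of the $d$ matrix slots carries one of $m$ ``types'' (the block index it threads through), modulo cyclic symmetry within each trace and modulo the symmetry permuting equal traces — which is precisely the cycle-index count of $S_d$ acting on functions from $d$ points to an alphabet of size $m$. That count has exponential generating structure giving $\prod_{k\geq 1}(1 - m t^k)^{-1}$: one sees this by noting that a single ``cyclic word'' of length $k$ in $m$ colors, counted up to rotation, is weighted so that the generating function for one cycle is $-\log(1 - m t^k)/\text{(appropriate factor)}$, and exponentiating over all cycle lengths to account for multisets of cycles yields the stated product. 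The main obstacle is justifying that the limit exists and equals this count — that is, showing that \emph{all} the stable relations among the $\Tr^\sigma$ are accounted for by these symmetries and no ``new'' relations appear as $n_i \to \infty$. I would handle this by the same representation-theoretic bookkeeping as in Proposition \ref{prop_mat_diag}: establish an exact multigraded formula for finite $\bfn$ in terms of LR coefficients with length constraints, check monotone stabilization as each $n_i$ grows (the constraints only ever remove terms, and they disappear in each fixed degree), and then identify the stable multigraded generating function with a specialization of the Main Formula; setting $t_{ij} = t$ collapses it to $\prod_{k=1}^\infty (1 - m t^k)^{-1}$.
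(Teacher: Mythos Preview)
Your overall strategy---compute an exact LR-coefficient formula for $a^{(\bfn)}(d)$, check that length constraints vanish for $n_j\ge d$, then identify the stable series with a specialization of the Main Formula---is exactly what the paper does. But you make the first step much harder than necessary, and this creates a real gap at the ``collapse'' stage.

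The paper does \emph{not} decompose $M_n$ into its $m^2$ blocks. Instead it applies the Peter--Weyl decomposition $\bbC[M_n]\cong\bigoplus_\la \dual{\F{\la}{n}}\ot\F{\la}{n}$ to the whole matrix once, as a $\GL_n\times\GL_n$-module, and only \emph{then} branches each factor $\F{\la}{n}$ from $\GL_n$ down to $\K(\bfn)$ using $\F{\la}{n}\cong\bigoplus_\bfmu \c{\la}{\bfmu}\,\F{\mu^{(1)}}{n_1}\ot\cdots\ot\F{\mu^{(m)}}{n_m}$. Diagonal $\K(\bfn)$-invariants then force the two branching tuples to coincide, and one reads off immediately
\[
a^{(\bfn)}(d)=\sum_{\la\vdash d}\ \sum_{\bfmu}\bigl(\c{\la}{\bfmu}\bigr)^2
\]
with the obvious length constraints. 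This is already literally the right-hand side of the Main Formula, so the product $\prod_k(1-mt^k)^{-1}$ drops out by setting $t_1=\cdots=t_m=t$ and $t_j=0$ for $j>m$. No necklace combinatorics, no vertex-matching, no ``collapse'' is needed.

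Your block-by-block Peter--Weyl instead produces an expression indexed by $m^2$ partitions $\nu^{(i,j)}$ with, at each vertex $i$, a sum $\sum_{\alpha}\c{\alpha}{(\nu^{(i,1)},\ldots,\nu^{(i,m)})}\,\c{\alpha}{(\nu^{(1,i)},\ldots,\nu^{(m,i)})}$. That expression is of course equal to the paper's (both compute the same dimension), but it is \emph{not} visibly a specialization of the Main Formula, and you never actually bridge the gap: ``I expect these vertex sums collapse'' and the hand-wave about $-\log(1-mt^k)$ are not arguments. The necklace route you sketch can be made to work---the identity $\prod_k(1-t^k)^{-N_k(m)}=\prod_k(1-mt^k)^{-1}$ is true---but it is an additional nontrivial identity, and your proposal does not prove it. The missing idea is simply: branch \emph{after} Peter--Weyl, not before.
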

\begin{proof}
We begin with the $\GL_n \times \GL_n$-decomposition
$\bbC[M_n] = \bigoplus \dual{\F{\la}{n}} \ot \F{\la}{n}$ with respect to
the action in the proof of Proposition \ref{prop_mat_diag}.  The irreducible $\GL_n$-representation $\F{\la}{n}$ is reducible upon restriction to $\K(\bfn)$.  The decomposition is given in terms of Littlewood-Richardson coefficients
\[
    \F{\la}{n} \cong \bigoplus_{\bfmu=(\mu^{(1)}, \cdots, \mu^{(m)})} \c{\la}{\bfmu} \;
        \F{\mu^{(1)}}{n} \ot \cdots \ot \F{\mu^{(m)}}{n}.
\]
Therefore, as a $\K(\bfn) \times \K(\bfn)$-representation we have
\[
    \bbC[M_n]_d = \sum_{\bfmu, {\bf \nu}}
        \left( \sum_\la \c{\la}{\bfmu} \c{\la}{{\bf \nu}} \right) \;
        \dual{  \ot_{j=1}^m \F{\mu^{(j)}}{n_j} } \ot
        \left(  \ot_{j=1}^m \F{\nu^{(j)}}{n_j} \right)
\] where the sum is over all $\la$ with $|\la| = d$, $\ell(\la) \leq n$ and
       $\ell(\mu^{(j)}), \ell(\nu^{(j)}) \leq n_j$.  If we restrict to the diagonally
embedded $\K(\bfn)$-subgroup, we obtain an invariant exactly when $\bfmu = {\bf \nu}$.  Thus, $a^{(\bfn)}(d) = \sum \left( \c{\la}{\bfmu} \right)^2$, with the appropriate restrictions on $\la$ and $\bfmu$.

If all $n_j \geq d$ then the condition on the lengths of partitions disappears, and we may sum over all $\la \vdash d$.  The result follows by specializing the main formula by setting $t_j = t$ for $1 \leq j \leq m$ and $t_j = 0$ for $j>m$.  \qed
\end{proof}

Although we will not need it for our present purposes, it is worth pointing out that the algebra $\bbC[M_n]$ has a natural $\bbN^m$ gradation defined by the action of the center of $\K(\bfn)$.  This multigradation refines the gradation by degree.  The limiting multigraded Hilbert series is the same as that of Proposition \ref{prop_mat_diag}.  Upon specialization to $t_1 = t_2 = \cdots = t_m = t$ we obtain the usual gradation by degree (in both situations).  The advantage of considering the more refined gradation is that one can consider the direct limit as $m$ and $n$ go to infinity.  This will be relevant in the next section.

\section{Harmonic polynomials on matrices}\label{sec:GLn-harmonics}

A specific goal of this article is to understand the dimension of the space of degree $d$ homogeneous $\K(\bfn)$-invariant harmonic polynomials on $M_n$.  With this fact in mind, we observe the following specialization of the product in the main formula.  Let $t_j = t^j$.  Then we obtain:
\[
    \prod_{k=1}^\infty \frac{1}{1 - (t^k + t^{2k} + t^{3k} + \cdots)} =
    \prod_{k=1}^\infty \frac{1}{1- \frac{t^k}{1-t^k}} =
    \prod_{k=1}^\infty \frac{1-t^k}{1-2 t^k}.
\]
For a sequence $\bfmu = (\mu^{(1)}, \mu^{(2)}, \cdots)$ set $gr(\bfmu) = \sum_{j=1}^\infty j |\mu^{(j)}|$.  The equation in the main formula becomes
\[
    \prod_{k=1}^\infty \frac{1-t^k}{1- 2 t^k} =
    \sum_\la \sum_\bfmu \left( \c{\la}{\bfmu} \right)^2 t^{gr(\bfmu)}.
\]  The notation $gr$ is used for the word \emph{grade}.  We explain this choice next.  Let $\bbC[M_n; d]$ denote the polynomials functions on the $n \times n$ complex matrices together with the gradation defined by $d$ times the usual degree.  That is, $\bbC[M_n; d_1]_{(d_2)}$ denotes the degree $d_2$ homogeneous polynomials, but regarded as the $d_1 d_2$ graded component in $\bbC[M_n; d_1]$.  We consider the $\bbN$-graded complex algebra $\mcA_n$ defined\footnote{One might consider this as a ``Bosonic Fock space.''  Precisely, we mean the infinite tensor product in which each element is a finite sum of tensors involving only finitely many of the factors.} as
\begin{eqnarray*}
    \mcA_n &=& \bbC[M_n; 1] \otimes \bbC[M_n; 2] \otimes \bbC[M_n; 3] \otimes \cdots \\
           &=& \sum_{\de=0}^\infty \mcA_n[\de]
\end{eqnarray*} where $\mcA_n[\de]$ is the graded $\de\in \bbN$ component (with the usual grade defined on a tensor product of algebras).  The group $\GL_n$ acts on each $\bbC[M_n; d]$ by the adjoint action, and respects the grade.  Under the diagonal action on the tensors, the $\GL_n$-invariants, $\mcA_n^{\GL_n}$, have $A_n(\bft)$ as the Hilbert series when $\bft$ is specialized to $(t_1, t_2, t_3, \cdots) = (t, t^2, t^3, \cdots)$.  This $\GL_n$-action respects the $\de$ component in the gradation.  That is to say that the Hilbert series is
\[
    A_n(t,t^2,t^3, \cdots) = \sum_{\de=0}^\infty
          \dim \left(\mcA_n[\de]\right)^{\GL_n} t^\de.
\]
For $d=0,\cdots,n$ the coefficient of $t^d$ in $A_n(t,t^2,\cdots)$ is the same as the coefficient of $t^d$ in $\widetilde A(t,t^2,\cdots)$.
Summarizing, we can say that stably, as $n \rightarrow \infty$, the Hilbert series of $\mcA_n^{\GL_n}$ is $\prod_{k=1}^\infty \frac{1-t^k}{1-2t^k}$.

\bigskip
We turn now to the $\GL_n$-harmonic polynomials on $M_n$ together with its usual gradation by degree.  Kostant's theorem \cite{Kostant} tells us that
\[ \bbC[M_n] \cong \bbC[M_n]^{\GL_n} \ot \mcH(M_n).\]
As before let $\K = \K(n_1, n_2)$ denote the copy of $\GL_{n_1} \times \GL_{n_2}$ (symmetrically) embedded in $\GL_{n_1+n_2}$.  Passing to the $\K(n_1,n_2)$-invariant subspaces we obtain:
\[
    \bbC[M_n]^\K = \bbC[M_n]^{\GL_n} \ot \mcH(M_n)^\K.
\]  The Hilbert series of $\bbC[M_n]^{\GL_n}$ is well known to be $\prod_{k=1}^n \frac{1}{1-t^k}$, while the Hilbert series $\bbC[M_n]^\K$ is $A_n(t,t)$.  These facts imply that the dimension of the degree $d$ homogenous $\K$-invariant harmonic polynomials is the coefficient of $t^d$ in $F_n(t) = A_n(t,t) \prod_{j=1}^n (1-t^j)$.  We have that the coefficient of $t^d$ in $F_n(t)$ for \\
$d = 0, \cdots, \min(n_1, n_2)$ agrees with the coefficient of $t^d$ in
\[ \widetilde A(t,t^2, t^3, \cdots) \prod_{k=1}^\infty (1-t^k) = \prod_{k=1}^\infty \frac{1-t^k}{1-2 t^k}.
\]
Again summarizing, we say that stably, as $n_1, n_2 \rightarrow \infty$,\\
the Hilbert series of $\mcH(M_{n_1 + n_2})^{\K(n_1, n_2)}$ is the same as $\mcA_n^{\GL_n}$ as $n, n_1, n_2 \rightarrow \infty$.   That is to say, for \emph{fixed} $\de$ we have
\[
    \lim_{n_1,n_2 \rightarrow \infty} \dim \mcH(M_{n_1+n_2})^{\K(n_1, n_2)}_\de =
    \lim_{n \rightarrow \infty} \dim \mcA_n[\de]^{\GL_n}.
\]

\bigskip

Observe that this procedure generalizes.  Let $m \geq 2$.  Analogous to before, let $\bbC[M_n^m;d]$ denote the polynomial function on $M_n^m = M_n \oplus \cdots \oplus M_n$ ($m$-copies) together with the $\bbN$-gradation defined such that $\bbC[M_n^m;d_1]_{d_2}$ consists of the degree $d_2$ homogeneous polynomials but regarded as being the $d_1 d_2$-th graded component.  Note that the $\de$-th component in the grade is (0) if $\de$ is not a multiple of $d_1$.

Let $\mcA^m_n$ be the $\bbN$-graded algebra defined as
\[
    \mcA^m_n = \bbC[M_n^{m-1}; 1] \otimes \bbC[M_n^{m-1}; 2] \otimes \bbC[M_n^{m-1}; 3] \otimes \cdots
\]  Since $\mcA^m_n$ is a tensor product of $\bbN$-graded algebras, it has the structure of an $\bbN$-graded algebra.  As before let $\mcA^m_n[\de]$ is the $\de$-th graded component.  The group $\GL_n$ acts on each $\bbC[M_n^m; d]$ by the adjoint action, and respects the grade.

Next, set $t_1 = t_2 = \cdots = t_{m-1} = t$, then $t_{m} = t_{m+1} = \cdots = t_{2m-2} = t^2$, then $t_{2m-1} = \cdots = t_{3m-3} = t^3$, and so on.  The result of this procedure is

\begin{theorem}\label{thm:main}
For all $\bfn = (n_1, \cdots, n_m)\in {\mathbb Z}_+^m$ and $d \in \bbN$, let
\[
    h_d(\bfn) = \dim {\left( \mcH^d(M_n) \right)}^{\K{(\bfn)}}.
\] Then for fixed $d$, the limit
\[
    \lim_{n_1 \rightarrow \infty} \cdots \lim_{n_m \rightarrow \infty}
            h_d(n_1, \cdots, n_m)
\] exists.  Let the limiting value be $h_d$.  Then
\[
    h_d = \lim_{n \rightarrow \infty} \dim {(\mcA^m_n[d])}^{\GL_n}.
\]
\end{theorem}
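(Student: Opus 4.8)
The plan is to show that both sides of the asserted identity equal the coefficient of $t^d$ in the explicit product $\prod_{k=1}^\infty \frac{1-t^k}{1-mt^k}$, and that every limit in sight is in fact \emph{eventually constant}, which makes the nested limits unambiguous and lets one interchange ``$\lim$'' with ``coefficient of $t^d$''. First I would handle the harmonic side. By Kostant's theorem $\bbC[M_n]\cong\bbC[M_n]^{\GL_n}\ot\mcH(M_n)$ as graded $\GL_n$-modules for the conjugation action, and since $\K(\bfn)$ is reductive and acts trivially on $\bbC[M_n]^{\GL_n}$, passing to $\K(\bfn)$-invariants gives $\bbC[M_n]^{\K(\bfn)}\cong\bbC[M_n]^{\GL_n}\ot\mcH(M_n)^{\K(\bfn)}$. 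As $\bbC[M_n]^{\GL_n}$ (conjugation on one copy) is freely generated by $\Tr X,\Tr X^2,\dots,\Tr X^n$, its Hilbert series is $\prod_{k=1}^n(1-t^k)^{-1}$, and since $\bbC[M_n]^{\K(\bfn)}$ has Hilbert series $A^{(\bfn)}(t)$ by definition, I obtain
\[
    \sum_{d\ge 0} h_d(\bfn)\,t^d \;=\; A^{(\bfn)}(t)\prod_{k=1}^n(1-t^k).
\]
When $\min_j n_j\ge d$, all of the length conditions $\ell(\la)\le n$, $\ell(\mu^{(j)})\le n_j$ in the expansion of $a^{(\bfn)}(e)$ for $e\le d$ (from the proof of Proposition \ref{prop_mat_block}) are vacuous, and $\prod_{k=1}^n(1-t^k)$ already agrees with $\prod_{k\ge1}(1-t^k)$ in degrees $\le d$; hence $h_d(\bfn)$ is constant once $\min_j n_j\ge d$, so the nested limit exists and, using $A(t)=\prod_{k\ge1}(1-mt^k)^{-1}$ from Proposition \ref{prop_mat_block}, equals the coefficient of $t^d$ in $A(t)\prod_{k\ge1}(1-t^k)=\prod_{k=1}^\infty\frac{1-t^k}{1-mt^k}$.

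Next I would treat the ``Fock space'' side. Writing $\mcA^m_n=\bigotimes_{j\ge1}\bbC[M_n^{m-1};j]$, each graded piece $\mcA^m_n[\de]$ is finite dimensional, since only the first $\de$ blocks contribute, so I may apply Peter--Weyl to each copy of $M_n$ together with the Littlewood--Richardson rule exactly as in the proof of Proposition \ref{prop_mat_diag} to obtain, as $\GL_n\times\GL_n$-modules,
\[
    \mcA^m_n \;\cong\; \bigoplus_{\al,\be}\Bigl(\sum_\bfmu \c{\al}{\bfmu}\,\c{\be}{\bfmu}\Bigr)\dual{\F{\al}{n}}\ot\F{\be}{n},
\]
where $\bfmu$ runs over finitely supported families of partitions indexed by the pairs (block $j$, copy within that block), and the degree-$\de$ part selects the $\bfmu$ with $\sum_j j\,|\mu^{(j,\cdot)}|=\de$. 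Restricting to the diagonal $\GL_n$ and applying Schur's lemma gives $\dim(\mcA^m_n[\de])^{\GL_n}=\sum_\al\sum_\bfmu(\c{\al}{\bfmu})^2$ over $\ell(\al)\le n$ and $\bfmu$ of grade $\de$; for $n\ge\de$ the condition $\ell(\al)\le n$ drops. I then recognize this as the coefficient of $t^\de$ on the right side of the main formula under the substitution $t_1=\cdots=t_{m-1}=t$, $t_m=\cdots=t_{2m-2}=t^2,\dots$ described before the theorem, which replaces $t_1^k+t_2^k+\cdots$ by $(m-1)\frac{t^k}{1-t^k}$ and hence turns the left side of the main formula into $\prod_{k\ge1}\frac{1-t^k}{1-mt^k}$. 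So $\lim_{n\to\infty}\dim(\mcA^m_n[d])^{\GL_n}$ exists (the sequence is constant for $n\ge d$) and equals the coefficient of $t^d$ in the same product, matching Step 1 and proving the theorem.

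The real content here is light, being an assembly of Kostant's theorem, the main formula, and Propositions \ref{prop_mat_diag}--\ref{prop_mat_block}, so I expect the main obstacle to be purely bookkeeping. The two points that need genuine care are: (i) checking that the length restrictions $\ell(\la)\le n$ and $\ell(\mu^{(j)})\le n_j$ force each sequence to be \emph{eventually constant}, since this is exactly what legitimizes the nested limits and the interchange of limit with coefficient extraction; and (ii) matching the $\bbN$-grading on the infinite tensor product $\mcA^m_n$ with the exponent $\sum_i j(i)\,|\mu^{(i)}|$ produced by the stated specialization of $t_1,t_2,\dots$ — in particular verifying that precisely $m-1$ (not $m$) of the indeterminates are set to each power $t^j$, which is what makes the left side of the main formula collapse to $\prod_{k\ge1}\frac{1-t^k}{1-mt^k}$ rather than $\prod_{k\ge1}(1-mt^k)^{-1}$ or something else.
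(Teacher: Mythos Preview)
Your proposal is correct and follows essentially the same route as the paper: Kostant's factorization on the harmonic side combined with Proposition~\ref{prop_mat_block}, the Peter--Weyl/Littlewood--Richardson analysis of $\mcA^m_n$ on the other side as in Proposition~\ref{prop_mat_diag}, and the specialization of the main formula collapsing the product to $\prod_{k\ge1}\frac{1-t^k}{1-mt^k}$. The paper's written proof is terser---it records only the specialization calculation and defers the rest to the preceding $m=2$ discussion---whereas you spell out the eventual-constancy argument and the grading bookkeeping explicitly, but there is no substantive difference in strategy.
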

\begin{proof}
After the specialization we obtain:
\begin{eqnarray*}
\prod_{k=1}^\infty
\frac{1}{1- (   \underbrace{t^{ k} + \cdots + t^{ k}}_{m-1 \mbox{ copies }} +
                \underbrace{t^{2k} + \cdots + t^{2k}}_{m-1 \mbox{ copies }} + \cdots)}
&=& \prod_{k=1}^\infty \frac{1}{1 - (m-1) (t^k + t^{2k} + \cdots )} \\
&=& \prod_{k=1}^\infty \frac{1}{1 - (m-1) \frac{t^k}{1-t^k} } \\
&=& \prod_{k=1}^\infty \frac{1}{ \frac{1-t^k - (m-1) t^k}{1-t^k} } \\
&=& \prod_{k=1}^\infty \frac{1-t^k}{1-m \, t^k}.
\end{eqnarray*}
The significance of this calculation is that it allows for another interpretation of sums of Littlewood-Richardson coefficients.  The rest of the proof is identical to the $m=2$ case in the preceding discussion. \qed
\end{proof}

\subsection{A bigraded algebra and a specialization of the main formula}

As before, the group $\GL_n$ acts on $M_n$ by conjugation, and then in turn acts diagonally on $M_n \oplus M_n$.  That is, given $(X,Y) \in M_n \oplus M_n$, and $g \in \GL_n$, we have $g \cdot (X,Y) = (g X g^{-1}, g Y g^{-1})$.  We then obtain an action on $\bbC[M_n \oplus M_n]$.

We have already observed that the algebra $\bbC[M_n \oplus M_n]$ is bigraded.  That is, let $\bbC[M_n \oplus M_n](i,j)$ denote the homogenous polynomial functions on $M_n \oplus M_n$ of degree $i$ in the first copy of $M_n$ and degree $j$ in the second copy of $M_n$.  Let $a$ and $b$ be positive integers.  We set
\[ \bbC[M_n \oplus M_n; (a,b)](a i,b j) = \bbC[M_n \oplus M_n](i, j) \]
with the other components zero.
Put another way, $\bbC[M_n \oplus M_n; (a,b)]$ is the algebra of polynomial function on $M_n \oplus M_n$ together with the bi-gradation defined by $(a,b)$ times the usual degree.  As before we consider the infinite tensor product \[
    \mcB_n = \bigotimes_{a=1}^\infty \bigotimes_{b=1}^\infty \bbC[M_n \oplus M_n; (a,b)].
\]

Next, note the following, obvious, identity:
\[
    \prod_{k=1}^\infty \frac{1}{1- \frac{x^k y^k}{(1-x^k)(1-y^k)}}
    = \prod_{k=1}^\infty \frac{(1-x^k)(1-y^k)}{1-(x^k+y^k)}.
\]
We observe that this is a specialization of the product in the main formula.  Specifically, let $q^i t^j=z_s$ where $z_s$ is given as the $(i,j)$ entry in the following table.

\begin{center}
\begin{tabular}{l| c c c c c c}
$q^it^j$       & $q$         & $q^2$ & $q^3$ & $q^5$ & $q^6$ & $\cdots$  \\
\hline
$t$       & $z_1$       & $z_2$ & $z_4$ & $z_7$ & $z_{11}$ &$\cdots$   \\
$t^2$     & $z_3$       & $z_5$ & $z_8$ & $z_{12}$ & $z_{17}$&$\cdots$  \\
$t^3$     & $z_6$       & $z_9$ & $z_{13}$ &$z_{18}$ &  $z_{24}$ &$\cdots$  \\
$t^4$     & $z_{10}$    & $z_{14}$ &$z_{19}$ & $z_{25}$ &  $z_{32}$ & $\cdots$ \\
$t^5$     & $z_{15}$    & $z_{20}$& $z_{26}$ & $z_{33}$ & $z_{41}$  &$\cdots$  \\
$\vdots$     & $\vdots$    &$\vdots$ &$\vdots$  &$\vdots$  &$\vdots$   &$\ddots$  \\
\end{tabular}
\end{center}

Then,
\begin{eqnarray*}
\displaystyle\prod_{k=1}^{\infty}\frac{1}{1-(z_1^{k}+z_2^{k}+z_3^{k}+\cdots)}&=\prod_{k=1}^{\infty}\frac{1}{1-\displaystyle\sum_{i,j\ge 1}(q^{i} t^{j})^k}\\
&=\prod_{k=1}^{\infty}\frac{1}{1-\frac{q^{k}t^{k}}{(1-q^k)(1-t^k)}}\\
&=\prod_{k=1}^{\infty}\frac{(1-q^k)(1-t^k)}{1-(q^k+t^k)}.
\end{eqnarray*}

In this way, the $\GL_n$-invariants in the harmonic polynomials on $M_n \oplus M_n$ may be related to the multigraded algebra structure $\mcB$, as in the singly graded case.

This identity becomes significantly more complicated when generalized to harmonic polynomials on more than two copies of the matrices.  This fact will be the subject of future work.

\section{Some combinatorics related to finite fields}\label{sec:combinatorics}

In this section, we collect remarks of a combinatorial nature that provide a more concrete understanding of the sum of squares that we consider in this paper.

From elementary combinatorics one knows that the infinite product
\[
    \prod_{k=1}^\infty \frac{1}{1-q \; t^k} = \sum_{n=0}^\infty \sum_{\ell=0}^\infty p_{n,\ell} q^\ell t^n
\] where $p_{n,\ell}$ is the number of partitions of $n$ with exactly $\ell$ parts.   When $q$ is specialized to a positive integer the coefficients of this series in $t$ has many interpretations.

\subsection{The symmetric group}

Fix a positive integer $d$, and a tuple of positive integers
$\bfd = (d_1, d_2, \cdots, d_m)$ with $d_1 + \cdots + d_m = d$.  As before, let $S_\bfd$ denote the subgroup of $S_d$ isomorphic to $S_{d_1} \times \cdots \times S_{d_m}$ embedded by letting the $i^{th}$ factor permute the set $J_i$ where $\{J_1, \cdots, J_m \}$ is the partition of $\{1, \cdots, d\}$ into the $m$ contiguous intervals with $|J_i| = d_i$.  That is, $J_1= \{1,2,\cdots,d_1\}$, $J_2 = \{d_1+1, \cdots, d_1+d_2 \}$, etc.

The group $S_\bfd$ acts on $S_d$ by conjugation.  Let the set of orbits be denoted by $\mcO(\bfd)$.  We have

\begin{proposition}  For any $\bfd = (d_1, \cdots, d_m)$,
\[
    |\mcO(\bfd)| = \sum_{\la \vdash m} \; \sum_{\bfmu = (\mu^{(1)}, \cdots, \mu^{(m)})} \left( \c{\la}{\bfmu} \right)^2
\] where the inner sum is over all tuples of partitions with $\mu^{(j)} \vdash d_j$.
\end{proposition}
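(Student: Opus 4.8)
The plan is to realize $|\mcO(\bfd)|$ as the dimension of a conjugation-invariant subspace and then to decompose that subspace using the branching rule recalled in Section~\ref{sec:Preliminaries}. First I would invoke the elementary fact (Burnside's lemma, over a field of characteristic zero) that for a finite group $H$ acting on a finite set $X$ the number of $H$-orbits on $X$ equals $\dim_\bbC \bbC[X]^H$. Applying this with $X = S_d$ and $H = S_\bfd$ acting by conjugation gives
\[
    |\mcO(\bfd)| = \dim \bbC[S_d]^{S_\bfd},
\]
where $\bbC[S_d]$ denotes the group algebra (equivalently, the $\bbC$-valued functions on $S_d$) and the superscript is the fixed space under simultaneous conjugation.

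Next I would use the two-sided (bi-regular) decomposition $\bbC[S_d] \cong \bigoplus_{\la} V^{\la}_{d} \ot \dual{V^{\la}_{d}}$ as a module for $S_d \times S_d$ acting by left and right translation, the sum running over all $\la \vdash d$. Conjugation by $S_\bfd$ is precisely the restriction of this action to the diagonally embedded $S_\bfd \subseteq S_\bfd \times S_\bfd \subseteq S_d \times S_d$. Restricting each tensor factor separately to $S_\bfd$ and using the fact recalled in Section~\ref{sec:Preliminaries} that $V^{\la}_{d}$ restricted to $S_\bfd$ is $\bigoplus_{\bfmu} \c{\la}{\bfmu}\, V(\bfmu)$, with $V(\bfmu) = V^{\mu^{(1)}}_{d_1} \ot \cdots \ot V^{\mu^{(m)}}_{d_m}$ and the sum over $\bfmu$ with $\mu^{(j)} \vdash d_j$, I obtain
\[
    \bbC[S_d] \;\cong\; \bigoplus_{\la \vdash d}\; \Big( \bigoplus_\bfmu \c{\la}{\bfmu}\, V(\bfmu) \Big) \ot \Big( \bigoplus_{{\bm{\nu}}} \c{\la}{{\bm{\nu}}}\, \dual{V({\bm{\nu}})} \Big)
\]
as a representation of $S_\bfd$ (diagonal action). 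Finally I would count the diagonal invariants: since the $V(\bfmu)$ are irreducible and pairwise inequivalent $S_\bfd$-modules, Schur's lemma gives $\dim\big(V(\bfmu) \ot \dual{V({\bm{\nu}})}\big)^{S_\bfd} = \dim {\rm Hom}_{S_\bfd}\!\big(V({\bm{\nu}}), V(\bfmu)\big) = \delta_{\bfmu,{\bm{\nu}}}$, so $\dim \bbC[S_d]^{S_\bfd} = \sum_{\la \vdash d} \sum_\bfmu \big(\c{\la}{\bfmu}\big)^2$, which is the asserted identity (terms with $|\la| \neq d$ contribute nothing, since $\c{\la}{\bfmu} = 0$ unless $|\la| = |\bfmu| = d$; this is why the outer sum may be written over partitions of $d$). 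As a cross-check I would also note the purely character-theoretic route: Burnside gives $|\mcO(\bfd)| = |S_\bfd|^{-1}\sum_{g \in S_\bfd}|C_{S_d}(g)|$, column orthogonality in $S_d$ gives $|C_{S_d}(g)| = \sum_\la \chi^\la(g)^2$ (characters of $S_d$ being real), and $|S_\bfd|^{-1}\sum_{g\in S_\bfd}\chi^\la(g)^2 = \langle {\rm Res}\,\chi^\la, {\rm Res}\,\chi^\la\rangle_{S_\bfd} = \sum_\bfmu (\c{\la}{\bfmu})^2$.

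I do not anticipate a real obstacle: every ingredient — the orbit-counting lemma, the bi-regular decomposition of the group algebra, the Littlewood--Richardson branching rule for $S_d \downarrow S_\bfd$, and Schur's lemma — is standard, and the branching rule is already quoted in the paper. The only points needing mild care are phrasing the orbit count as a dimension of invariants and keeping the bookkeeping of duals straight; since $S_d$-representations are self-dual, one may in fact drop every $\dual{(\,\cdot\,)}$ throughout (replacing $\mathrm{Hom}$ arguments accordingly), which shortens the write-up without affecting the conclusion.
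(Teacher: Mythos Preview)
Your proposal is correct and follows essentially the same route as the paper: the Peter--Weyl (bi-regular) decomposition of $\bbC[S_d]$, the $S_d \downarrow S_\bfd$ branching rule with Littlewood--Richardson multiplicities, and Schur's lemma to count diagonal invariants. You are simply more explicit than the paper about the first step (identifying $|\mcO(\bfd)|$ with $\dim \bbC[S_d]^{S_\bfd}$) and you add a character-theoretic cross-check, but the core argument is the same.
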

\begin{proof}  We begin with the Peter-Weyl type decomposition of $\bbC[S_d]$
\[
    \bbC[S_d] \cong \bigoplus_{\la \vdash d}  \; \dual{V_d^\la} \ot V_d^\la.
\]  We then recall that Littlewood-Richardson coefficients describe the branching rule from $S_d$ to $S_\bfd$:
\[
    V^\la = \bigoplus_{\bfmu} \c{\la}{\bfmu} V^{\mu^{(1)}} \ot \cdots \ot V^{\mu^{(m)}}.
\]
Combining the above decompositions, we obtain the result from Schur's Lemma.  \qed
\end{proof}

It is an elementary fact that the $S_d$-conjugacy class of a permutation $\sigma$ in $S_d$ is determined by the lengths of the disjoint cycles of $\sigma$.  A slightly more general statement is that the $S_\bfd$-conjugacy class of $\sigma \in S_d$ is determined by a union of cycles in which each element of a cycle is ``colored'' by colors corresponding to $J_1, \cdots, J_m$.  It is not difficult to write down a proof of this fact, but we omit it here for space considerations.

If one fixes $d$, then the sum over all $d$-compositions, $\sum_\bfd |\mcO(\bfd)|$, may be interpreted as the number of $d$-bead unions of necklaces with each bead colored by $m$ colors.  For example if $m=2$, and $d=4$, the resulting set may be depicted as:
\begin{center}
\includegraphics[width=11cm]{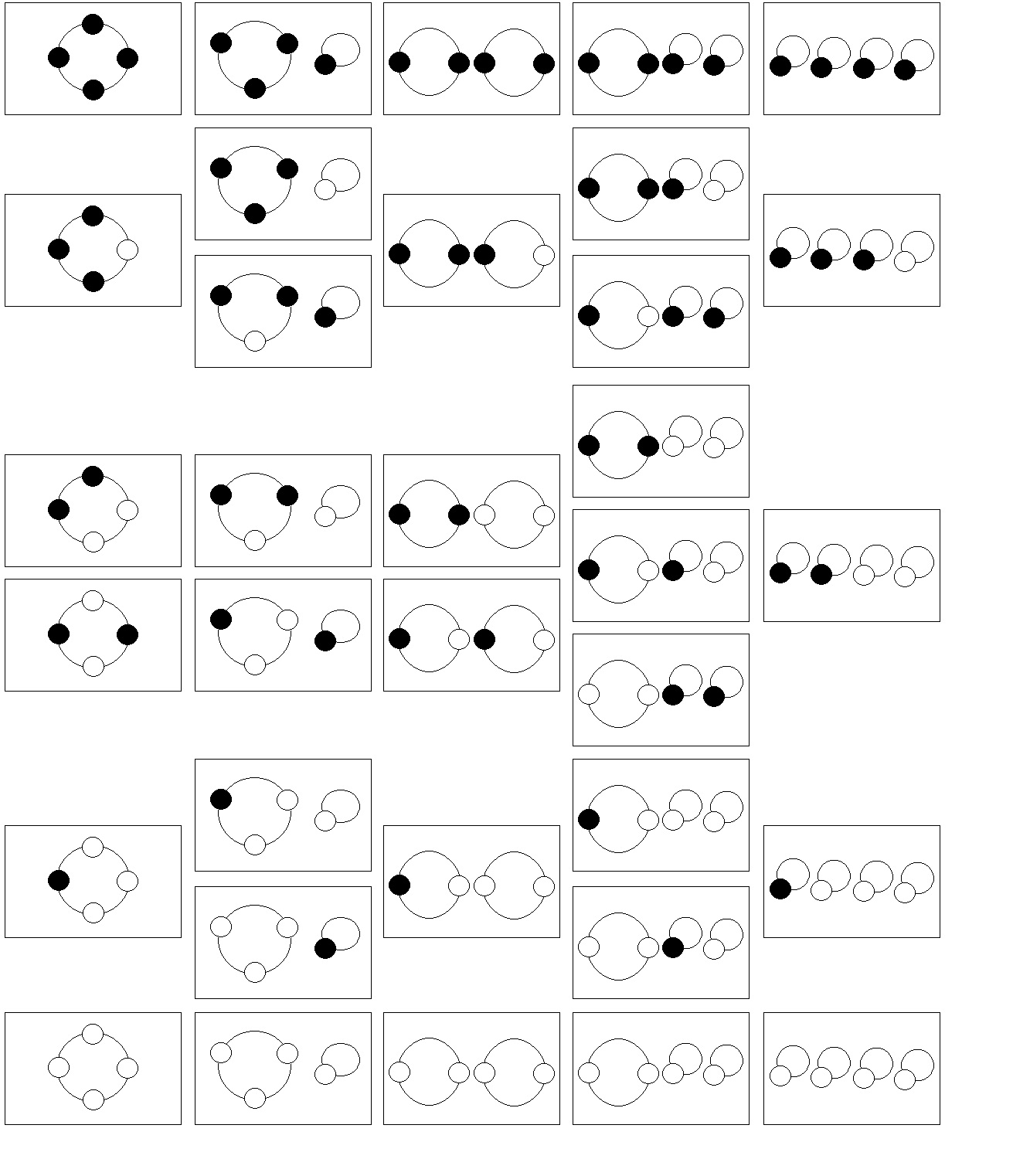}
\end{center}

A single $k$-bead necklace has $\bbZ/ k \bbZ$-symmetry.
If the beads of this necklace are colored with $m$ colors then the resulting colored necklace may have smaller group of symmetries.  Using Polya enumeration (i.e. ``Burnside's Lemma''), one can count such necklaces by the formula
\[
    N_k(m) = \frac{1}{k} \sum_{r|k} \phi(r) m^\frac{k}{r}.
\] where $\phi$ denotes the Euler totient function\footnote{That is, $\phi(r)$ is the number of positive integers relatively prime to $r$.}. The theory forming the underpinnings of the above formula may be put in a larger context of \emph{cycle index polynomials}.  We refer the reader to Doron Zeilberger's survey article (IV.18 of \cite{Gowers}).

The generating function for disjoint unions of such necklaces can be given by the product
\[
    \eta_m(t) = \prod_{k=1}^\infty {\left( \frac{1}{1-t^k} \right)}^{N_k(m)}
\]  That is to say that the number of $d$-bead necklaces, counted up to cyclic symmetry, is equal to the coefficient of $t^d$ in $\eta(t)$.  The main formula specializes to
\begin{equation}\label{eqn:eta}
    \eta_m(t) = \sum_{\la,\bfmu \vdash d} \left( \c{\la}{\bfmu} \right)^2 t^{|\la|},
\end{equation} where the sum is over partitions $\la$ and $m$-tuples of partitions, $\bfmu = (\mu^{(1)}, \cdots, \mu^{(m)})$.

Equation \ref{eqn:eta} begs for an (explicit) bijective proof which is no doubt obtained by merging both the Littlewood-Richardson rule and the Robinson-Schensted-Knuth bijection.  It is likely that more than one ``natural'' bijection exists.

\subsection{The general linear group over a finite field}

Let $p$ denote a prime number, and $v \in \bbZ^+$.  Set $q=p^v$.    Let $\GL_m(q)$ denote the general linear group of invertible $m \times m$ matrices over the field with $q$ elements.  The set, $\mathcal C_m(q)$, of conjugacy classes of $\GL_m(q)$ has a cardinality of note in that the infinite products has the following expansion:
\[
    \prod_{k=1}^\infty \frac{1-t^k}{1-q \, t^k} = \sum_{m=0} |\mathcal C_m(q)| t^m,
\] see \cite{Benson-Feit-Howe, Stong}, and the references within.

Note that from Equation \ref{eqn:eta} we obtain a new formula for the number of conjugacy classes of $\GL_m(q)$.  Namely,
\[
    \left( \prod_{k=1}^\infty (1-t^k) \right) \eta_q(t) =
    \prod_{k=1}^\infty \left(\frac{1}{1-t^k}\right)^{N_k(q)-1}.
\]

Stong reference

\section{Proof of the main formula}\label{sec:main_formula}

Before proceeding, we require some more notation.  Given a partition $\la = (\la_1 \geq \la_2 \geq \cdots)$ with $v_1$ ones, and $v_2$ twos, etc., we will call the sequence $(v_1, v_2, \cdots)$ the \emph{type vector} of $\la$.  Note that $\ell(\la) = \sum v_i$, while the size of $\la$ is $m = |\la| = \sum \la_i = \sum i v_i$.  As is standard, we set
\[
    z_\la = (v_1! 1^{v_1}) (v_2! 2^{v_2}) (v_3! 3^{v_3}) \cdots
\]  It is elementary that the cardinality of a conjugacy class of a permutation with cycle type $\la$ is $\frac{m!}{z_\la}$.  (Equivalently, the centralizer subgroup has order $z_\la$.)

\bigskip
We now prove the main formula.  The product on the left side (LS) may be expanded using the sum of a geometric series, the multinomial theorem, and then the sum-product formula, as follows
\begin{eqnarray*}
LS = \prod_{k=1}^\infty \frac{1}{1-(t_1^k+t_2^k+t_3^k+\cdots)} &=&
    \prod_{k=1}^\infty \sum_{u=0}^\infty (t_1^k + t_2^k + \cdots)^u \\
&=& \prod_{k=1}^\infty \sum_{u=0}^\infty \; \sum_{u_1 + u_2 + \cdots = u}
        \frac{u!}{u_1! u_2! \cdots} t_1^{k u_1} t_2^{k u_2} \cdots \\
&=& \prod_{k=1}^\infty \sum_{u_1, u_2, \cdots}
        \frac{(u_1 + u_2 + \cdots)!}{u_1! u_2! \cdots} t_1^{k u_1} t_2^{k u_2} \cdots \\
&=& \sum_{u_j^{(i)}, \cdots}
        \prod_{i=1}^\infty
            \frac{(u_1^{(i)} + u_2^{(i)} + \cdots)!}{u_1^{(i)}! u_2^{(i)}! \cdots}
                t_1^{i u_1^{(i)}} t_2^{i u_2^{(i)}} \cdots
\end{eqnarray*}
(with all sequences having finite support.)  We will introduce another sequence, ${\bf a} = (a_1, a_2, \cdots)$ and extract the coefficient of ${\bft}^{\bf a} = \prod t_i^{a_i}$ in the above formal expression to obtain
\[
LS = \sum_{{\bf a}}
    \left( \sum_{u_j^{(i)}: \forall j,\, \sum_i i u_j^{(i)} = a_j} \prod_{i=1}^\infty \frac{(u_1^{(i)} + u_2^{(i)} + \cdots)!}{u_1^{(i)}! u_2^{(i)}! \cdots} \right) \bft^{\bf a}
\]  The above is not such a complicated expression, although these formal manipulations may, at first, seem daunting.  Observe that the sequence $u_j^{(1)}, u_j^{(2)}, \cdots$ with $\sum_i i u_j^{(i)} = a_j$ encodes a partition of $a_j$ with the number $i$ occurring exactly $u_j^{(i)}$ times, while $\sum_{i \geq 1} u_j^{(i)}$ is the length of the partition.  We will call this partition $\mu^{(j)}$.  That is to say, $\mu^{(j)}$ has type vector $(u_j^{(1)}, u_j^{(2)}, \cdots)$.  The coefficient of $\bft^{\bf a}$ may be rewritten as a sum over all double sequences $u_j^{(i)}$ such that for all $j$, $\sum_i i u_j^{(i)} = a_j$ of
\begin{equation}\label{eqn:product}
\prod_{i=1}^\infty  \frac{(u_1^{(i)} + u_2^{(i)} + \cdots)!}{u_1^{(i)}! u_2^{(i)}! \cdots} =
\prod_{i=1}^\infty  \frac{(u_1^{(i)} + u_2^{(i)} + \cdots)! i^{\sum u_j^{(i)}}}{
(u_1^{(i)}! i^{u_1^{(i)}}) (u_2^{(i)}! i^{u_2^{(i)}}) \cdots}.
\end{equation}
Given a (finitely supported) sequence of partitions $\bfmu = (\mu^{(1)}, \mu^{(2)}, \cdots)$, we denote the partition obtained from the (sorted) concatenation of all $\mu^{(j)}$ by
$\cup \mu^{(j)}$.  Thus, if $\mu^{(j)}$ has type vector $(u_1^{(i)}, u_2^{(i)}, \cdots)$ then the number of $i$'s in $\cup \mu^{(j)}$ is $u_1^{(i)} + u_2^{(i)} + \cdots$.
It therefore follows that the numerator of the righthand side of Equation \ref{eqn:product} is $z_\la$ when $\la = \cup \mu^{(j)}$.  The denominator can easily be seen to be $z_{\mu^{(j)}}$.  From this observation we obtain
\[
    LS = \sum_{\bfmu} \frac{z_{\cup_{j=1}^\infty \mu^{(j)}}}{\prod_{j=1}^\infty z_{\mu^{(j)}}} z_1^{|\mu^{(1)}|} z_2^{|\mu^{(2)}|} z_3^{|\mu^{(3)}|} \cdots.
\]

\subsection{An application of the Hall scalar product}

Let $\Lambda_n$ denote the $S_n$-invariant polynomials (over $\bbC$ as usual) in the indeterminates $x_1, \cdots, x_n$, let $\Lambda[\bfx] = \lim_{\leftarrow} \Lambda_n$ denote the inverse limit.  Thus, $\Lambda[\bfx]$ is the algebra of symmetric functions.  For a non-negative integer partition $\la$, we let $s_\la(\bfx)$ denote the Schur function.  That is, for each $n$, $s_\la(\bfx)$ projects to the polynomial $s_\la(x_1, \cdots, x_n)$ which, as a function on the diagonal matrices, coincides with the character of the $\GL_n$-irrep $\F{\la}{n}$.  The set $\{s_\la(\bfx): \la \vdash d \}$ is a $\bbC$-vector space basis of the homogeneous degree $d$ symmetric functions.  We will define a non-degenerate symmetric bilinear form $\form{\cdot,\cdot}$ by declaring $\form{s_\al(\bfx), s_\be(\bfx)} = \de_{\al \be}$ for all non-negative integer partitions $\al$, $\be$.  This form is the \emph{Hall scalar product}.

Given an integer $m$, define $p_m(\bfx) = x_1^m + x_2^m + \cdots$ to denote the power sum symmetric function.  Given $\nu \vdash N$, set $p_\nu(\bfx) = \prod p_{\nu_j}(\bfx)$.   We remark that the left side of the main formula is easily seen to be $\sum_\nu p_\nu(\bft)$.

It is a consequence of Schur-Weyl duality that the coefficients of the Schur function expansion
\[
    p_\nu(\bfx) = \sum_\la \chi^\la(\nu) s_\la(\bfx)
\] are the characters of the $S_N$-irrep indexed by $\la$ evaluated at any permutation with cycle type $\nu$.  It is a standard exercise to see, from the orthogonality of the character table, that $p_\nu(\bfx)$ are an orthogonal basis of $\Lambda[\bfx]$, and
$\form{p_\nu(\bfx), p_\nu(\bfx)} = z_\nu$.

We next consider another set of indeterminates, $\bfy = y_1, y_2, \cdots$.
Set $\Lambda[\bfx,\bfy] = \Lambda[\bfx]\ot\Lambda[\bfy]$.
The Hall scalar product extends to $\Lambda[\bfx,\bfy]$ in the standard way as
\[
    \form{ f(\bfx)\ot g(\bfy), f'(\bfx)g'(\bfy) } =
    \form{f(\bfx), f'(\bfx)} \form{g(\bfy), g'(\bfy)}.
\] where $f(\bfx),f'(\bfx) \in \Lambda[\bfx]$ and $g(\bfy), g'(\bfy) \in \Lambda[\bfy]$.
(We then extend by linearity to all of $\Lambda[\bfx,\bfy]$.)

The character theoretic consequence of ($\GL_n, \GL_k$)-Howe duality is the Cauchy identity:
\begin{equation}\label{eqn:scalar1}
    \prod_{i,j=1}^\infty \frac{1}{1-x_i y_j} = \sum_\la s_\la(\bfx) s_\la(\bfy)
\end{equation} (in the infinite sets of variables).  In fact, for any pair of dual bases, $a_\la, b_\la$, with respect to the Hall scalar product, one has $\prod_{i,j=1}^\infty \frac{1}{1-x_i y_j} = \sum_\la a_\la(\bfx) b_\la(\bfy)$.  From this fact one obtains
\begin{equation}\label{eqn:scalar2}
    \prod_{i,j=1}^\infty \frac{1}{1-x_i y_j} = \sum_\la p_\la(\bfx) p_\la(\bfy)/z_\la.
\end{equation}

From our point of view, we will expand the following scalar product
\begin{equation}\label{eqn:scalar3}
    \form{
        \prod_{k=1}^\infty \prod_{i,j=1}^\infty \frac{1}{1-x_i y_j t_k},
        \prod_{i,j=1}^\infty \frac{1}{1-x_i y_j}
    }
\end{equation} in two different ways, corresponding to Equations \ref{eqn:scalar1} and \ref{eqn:scalar2}.

First by homogeneity of the Schur function and Cauchy's identity
\[
     \prod_{k=1}^\infty \sum_\mu s_\mu(\bfx) s_\mu(\bfy) t_k^{|\mu|} =
    \sum_{\bfmu = (\mu^{(1)}, \mu^{(2)}, \cdots)}
        \prod_j s_{\mu^{(j)}}(\bfx) \prod_j s_{\mu^{(j)}}(\bfy) t_1^{|\mu^{(1)}|} t_2^{|\mu^{(2)}|} \cdots.
\]  Since the multiplication of characters is the character of the tensor product of the corresponding representations, we have $s_\al s_\be = \sum \c{\ga}{\al \be} s_\ga$ in the $\bfx$ (resp. $\bfy$) variables.  Expanding the above product gives
\[
    \form{
        \prod_{k=1}^\infty \prod_{i,j=1}^\infty \frac{1}{1-x_i y_j t_k},
        \prod_{i,j=1}^\infty \frac{1}{1-x_i y_j}
    } = \sum_{\bfmu} \left( \c{\la}{\bfmu} \right)^2
            t_1^{|\mu^{(1)}|} t_2^{|\mu^{(2)}|} \cdots.
\]

Secondly, the scalar product in (\ref{eqn:scalar3}) may be expressed as
\[
    \form{\prod_{k=1}^\infty \; \sum_{\mu^{(k)}}
    p_{\mu^{(k)}}(\bfx)
    p_{\mu^{(k)}}(\bfy)/{z_{\mu^{(k)}}} \; t^{|\mu^{(k)}|},
    \sum_\la p_\la(\bfx) p_\la(\bfy)/z_\la }.
\]
We observe that $\prod_{k=1}^\infty p_{\mu^{(k)}} = p_\la$ where
$\la = \cup_{k=1}^\infty \mu^{(k)}$.

\subsection{A remark from ``Macdonald's book''}

The results presented in this paper emphasize describing the cardinality of an orbit space by a sum of Littlewood-Richardson coefficients.  It is important to note, however, that the main formula can be written simply as
\[
    \sum_\nu p_\nu(\bfx) = \sum_\bfmu \left( \sum_\la {\left(\c{\la}{\bfmu}\right)}^2 \right)
    x_1^{\mu^{(1)}} x_2^{\mu^{(2)}} x_1^{\mu^{(3)}} \cdots
\]
So from the point of view of \cite{Macdonald}, one realizes that the main formula is simply a way of expanding $\sum p_\nu(\bfx)$.  With remark in mind, we recall the ``standard'' viewpoint.

For a non-negative integer partition $\de = (\de_1 \geq \de_2 \geq \cdots)$ let $x^\de = x_1^{\de_1} x_2^{\de_2} x_3^{\de_3} \cdots$.  The monomial symmetric function, $m_\de(\bfx)$, is the sum over the orbit obtained by all permutations of the variables.  The monomial symmetric functions are a basis for the algebra $\Lambda$.

The question becomes obtaining an expansion of $\sum_\ga p_\ga(\bfx)$ into monomial symmetric functions.  This question is answered immediately by observing the expansion of $p_\ga$ into monomial symmetric functions, which can be found in \cite{Macdonald} on page 102 of Chapter I, Section 6.  For partitions $\ga$ and $\de$ define $L(\ga, \de)$ by the expansion
\[
    p_\ga(\bfx) = \sum_\de L_{\ga \de} m_\de(\bfx).
\]

We next provide a combinatorial description of $L_{\ga \de}$.  Let $\ga$ denote a partition of length $\ell$.  Given an integer valued function, $f$, defined on $\{1,2,3, \cdots, \ell\}$, set
\[
    f(\ga)_i = \sum_{j: f(j) = i} \nu_j
\] for each $i \geq 1$.

The sequence $(f(\ga)_1, f(\ga)_2, f(\ga)_3, \cdots)$ does not have to be weakly decreasing.  For example, if $\ga = (1,1,1)$ and $f(1)=1$, $f(2)=4$ and $f(3)=4$ then $f(\ga)_1=1$, $f(\ga)_4=2$ and $f(\ga)_k = 0$ for all $k\neq 1,4$.  However, often this sequences does define a partition.  We have

\begin{proposition}\label{prop_Macdonald}
$L_{\ga, \de}$ is equal to the number of functions $f$ such that $f(\ga) = \de$.
\end{proposition}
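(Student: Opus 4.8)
The plan is to unwind the definition of $L_{\ga\de}$ combinatorially by tracking what a monomial contributes when one literally multiplies out the product $p_\ga(\bfx) = \prod_{j=1}^{\ell} p_{\ga_j}(\bfx) = \prod_{j=1}^{\ell}\left(\sum_{i\ge 1} x_i^{\ga_j}\right)$. Choosing a term from each factor amounts to choosing, for each $j \in \{1,2,\dots,\ell\}$, an index $i = f(j)$; the resulting monomial is $\prod_{j=1}^{\ell} x_{f(j)}^{\ga_j} = \prod_{i\ge 1} x_i^{f(\ga)_i}$, where $f(\ga)_i = \sum_{j:\,f(j)=i}\ga_j$ is exactly the quantity defined just before the proposition. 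So $p_\ga(\bfx) = \sum_f \prod_{i\ge 1} x_i^{f(\ga)_i}$, the sum being over all functions $f\colon\{1,\dots,\ell\}\to\bbZ_+$.

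Next I would collect these monomials by their underlying partition. The monomial $\prod_i x_i^{f(\ga)_i}$ equals $x^{\eta}$ for the (possibly non-decreasing) exponent sequence $\eta = (f(\ga)_1, f(\ga)_2, \dots)$; sorting $\eta$ into weakly decreasing order gives a partition, call it $\mathrm{sort}(f(\ga))$. Grouping, $p_\ga(\bfx) = \sum_{\de} \#\{f : \mathrm{sort}(f(\ga)) = \de\}\, x^{\de}$ summed over the distinct exponent sequences. To relate this to the monomial symmetric function $m_\de(\bfx)$, which is the sum of $x^{\beta}$ over all \emph{distinct} permutations $\beta$ of $\de$, I would observe that the collection of functions $f$ is stable under postcomposition with a permutation of $\bbZ_+$: if $f(\ga)$ has exponent sequence $\eta$, then $(w\circ f)(\ga)$ has exponent sequence $w\cdot\eta$. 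Hence the number of $f$ producing each particular rearrangement $\beta$ of $\de$ is the same for all rearrangements $\beta$, and in particular equals the number of $f$ with $f(\ga)$ \emph{literally equal} to $\de$ (viewing $\de$ as the weakly decreasing exponent sequence). Therefore the coefficient of $m_\de(\bfx)$ in $p_\ga(\bfx)$ — which is by definition $L_{\ga\de}$ — equals $\#\{f : f(\ga) = \de\}$, which is the claim.

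The one genuine subtlety, and the step I expect to require the most care, is the reindexing argument in the previous paragraph: I must make sure that ``$f(\ga) = \de$'' in the proposition really means equality of the sequence $(f(\ga)_1, f(\ga)_2, \dots)$ with the partition $\de$ listed in weakly decreasing order (not merely equality up to rearrangement), and that the symmetry under $w$ acts freely enough on the fibers so that all rearrangements of $\de$ are hit by exactly $L_{\ga\de}$ functions each. Concretely, for a fixed $\de$ with distinct parts appearing with multiplicities, the stabilizer of the sequence $\de$ inside the relevant symmetric group of positions acts on $\{f : \mathrm{sort}(f(\ga))=\de\}$, and the orbits biject with the distinct rearrangements $\beta$ of $\de$; counting shows each rearrangement corresponds to exactly $\#\{f: f(\ga)=\de\}$ functions. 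Once this bookkeeping is pinned down, the rest is the routine expansion described above, and one may simply cite \cite{Macdonald}, Chapter I, Section 6, p.\ 102, for the classical statement that $L_{\ga\de}$ admits precisely this description, so the proof reduces to verifying the combinatorial translation.
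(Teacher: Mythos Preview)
Your argument is correct, and in fact you have supplied more than the paper does: the paper's own proof is simply the one-line citation ``See \cite{Macdonald} Proposition I (6.9),'' with no argument given. What you wrote is essentially the standard proof that appears in Macdonald, so in that sense the approaches coincide.

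One small simplification: the ``genuine subtlety'' you flag in your last paragraph can be bypassed. Since the monomial symmetric functions form a basis of $\Lambda$, and the monomial $x^\de$ (with $\de$ listed in weakly decreasing order) appears in $m_{\de'}$ only when $\de' = \de$, the coefficient $L_{\ga\de}$ is simply the coefficient of $x^\de$ in $p_\ga$. From your expansion $p_\ga(\bfx) = \sum_f x^{f(\ga)}$ this coefficient is immediately $\#\{f : f(\ga) = \de\}$, with equality of sequences. You do not need the orbit/stabilizer bookkeeping under postcomposition by permutations; the symmetry of $p_\ga$ is already encoded in the fact that you are expanding in the $m_\de$ basis.
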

\begin{proof}
See \cite{Macdonald} Proposition I (6.9) \qed
\end{proof}
From Proposition \ref{prop_Macdonald} and the main formula, we obtain

\begin{corollary}  Given a partition $\de$, the cardinality of
\[
    \left\{ f | \mbox{ for some parition $\ga$,} \ f(\ga) = \de \; \right\}
\] is equal to
\[
    \sum_\la \sum_\bfmu \left( \c{\la}{\bfmu} \right)^2
\] where the sum is over all $\bfmu$ such that $|\mu^{(j)}| = \de_j$ for all $j$ and $|\la| = |\bfmu|$.
\end{corollary}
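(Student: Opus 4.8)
The plan is to read off the coefficient of a single monomial from the symmetric function $\sum_\nu p_\nu(\bfx)$ in two ways: via the monomial expansion of power sums (which is governed by the numbers $L_{\ga\de}$, hence by Proposition \ref{prop_Macdonald} by the functions $f$), and via the ``Macdonald's book'' reformulation of the main formula.

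First I would note that for a fixed partition $\ga$, Proposition \ref{prop_Macdonald} gives $L_{\ga\de} = \#\{ f : f(\ga) = \de \}$, so that
\[
    \sum_\ga L_{\ga\de} \;=\; \#\left\{ (\ga, f) : f(\ga) = \de \right\},
\]
the partition $\ga$ being exactly the domain datum to which ``for some partition $\ga$'' refers in the statement. Thus it suffices to show that $\sum_\ga L_{\ga\de}$ equals the asserted sum of squares.

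Next I would extract the coefficient of the monomial $x_1^{\de_1} x_2^{\de_2} \cdots$ from $\sum_\nu p_\nu(\bfx) = \sum_\ga p_\ga(\bfx)$ in two ways. On one hand, substituting the defining expansion $p_\ga(\bfx) = \sum_\de L_{\ga\de} m_\de(\bfx)$ gives $\sum_\ga p_\ga(\bfx) = \sum_\de \left( \sum_\ga L_{\ga\de} \right) m_\de(\bfx)$; since $\de = (\de_1 \geq \de_2 \geq \cdots)$ is weakly decreasing, the monomial $x_1^{\de_1} x_2^{\de_2} \cdots$ is the dominant term of its orbit under permutation of variables, so it occurs in $m_{\de'}(\bfx)$ only when $\de' = \de$, and then with coefficient $1$; hence its coefficient in $\sum_\ga p_\ga(\bfx)$ is precisely $\sum_\ga L_{\ga\de}$. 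On the other hand, the ``Macdonald's book'' form of the main formula reads
\[
    \sum_\nu p_\nu(\bfx) \;=\; \sum_\bfmu \left( \sum_\la {\left( \c{\la}{\bfmu} \right)}^2 \right) x_1^{|\mu^{(1)}|} x_2^{|\mu^{(2)}|} x_3^{|\mu^{(3)}|} \cdots ,
\]
so the coefficient of $x_1^{\de_1} x_2^{\de_2} \cdots$ there is $\sum_\bfmu \sum_\la {\left( \c{\la}{\bfmu} \right)}^2$, the sum taken over all $\bfmu = (\mu^{(1)}, \mu^{(2)}, \cdots)$ with $|\mu^{(j)}| = \de_j$ for every $j$; the extra constraint $|\la| = |\bfmu|$ is automatic since $\c{\la}{\bfmu}$ vanishes otherwise. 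Equating the two values of this coefficient, and combining with the first paragraph, yields the Corollary.

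The step I expect to require the most care is the coefficient extraction: one must observe that $x_1^{\de_1} x_2^{\de_2} \cdots$ is the \emph{dominant} monomial of its symmetrization, so that among the $m_{\de'}(\bfx)$ it appears in exactly one (namely $\de' = \de$) and with coefficient $1$ — this is what legitimizes reading off the coefficient of a non-symmetric monomial from an identity between symmetric functions, in particular reconciling the ``raw monomial'' form of the main formula with the monomial-symmetric-function expansion. A secondary point worth stating explicitly is the bookkeeping of the first paragraph, namely that the object counted in the statement is a \emph{pair} $(\ga, f)$ (equivalently, $f$ together with its domain $\{1, \dots, \ell(\ga)\}$), so that $\sum_\ga L_{\ga\de}$, rather than the naive number of distinct functions $f$, is the quantity to be matched.
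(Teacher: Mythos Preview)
Your argument is correct and follows exactly the route the paper intends: the paper simply asserts that the Corollary follows ``from Proposition~\ref{prop_Macdonald} and the main formula,'' and what you have written is precisely the unpacking of that sentence --- compare the monomial-symmetric expansion of $\sum_\ga p_\ga(\bfx)$ (coefficients $\sum_\ga L_{\ga\de}$) with the ``Macdonald's book'' rewriting of the main formula, and read off the coefficient of the dominant monomial $x_1^{\de_1}x_2^{\de_2}\cdots$. Your two flagged subtleties (dominant-monomial extraction, and the reading of the set as pairs $(\ga,f)$) are exactly the points one has to make explicit; the paper leaves both implicit.
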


\def\cprime{$'$} \def\cprime{$'$}
\begin{bibdiv}
\bibliographystyle{alphabetic}
\begin{biblist}
\bib{Benson-Feit-Howe}{article}{
   author={Benson, David},
   author={Feit, Walter},
   author={Howe, Roger},
   title={Finite linear groups, the Commodore 64, Euler and Sylvester},
   journal={Amer. Math. Monthly},
   volume={93},
   date={1986},
   number={9},
   pages={717--719},
   issn={0002-9890},
   review={\MR{863974 (87m:05011)}},
   doi={10.2307/2322289},
}
\bib{Chevalley}{article}{
   author={Chevalley, C.},
   title={Sur certains groupes simples},
   language={French},
   journal={T\^ohoku Math. J. (2)},
   volume={7},
   date={1955},
   pages={14--66},
   issn={0040-8735},
   review={\MR{0073602 (17,457c)}},
}
\bib{Drensky}{article}{
   author={Drensky, Vesselin},
   title={Computing with matrix invariants},
   journal={Math. Balkanica (N.S.)},
   volume={21},
   date={2007},
   number={1-2},
   pages={141--172},
   issn={0205-3217},
   review={\MR{2350726 (2008m:16045)}},
}
\bib{Fulton}{book}{
   author={Fulton, William},
   title={Young tableaux},
   series={London Mathematical Society Student Texts},
   volume={35},
   note={With applications to representation theory and geometry},
   publisher={Cambridge University Press},
   place={Cambridge},
   date={1997},
   pages={x+260},
   isbn={0-521-56144-2},
   isbn={0-521-56724-6},
   review={\MR{1464693 (99f:05119)}},
}
\bib{GW}{book}{
   author={Goodman, Roe},
   author={Wallach, Nolan R.},
   title={Symmetry, representations, and invariants},
   series={Graduate Texts in Mathematics},
   volume={255},
   publisher={Springer},
   place={Dordrecht},
   date={2009},
   pages={xx+716},
   isbn={978-0-387-79851-6},
   review={\MR{2522486}},
   doi={10.1007/978-0-387-79852-3},
}
\bib{Gowers}{collection}{
   title={The Princeton companion to mathematics},
   editor={Gowers, Timothy},
   editor={Barrow-Green, June},
   editor={Leader, Imre},
   publisher={Princeton University Press},
   place={Princeton, NJ},
   date={2008},
   pages={xxii+1034},
   isbn={978-0-691-11880-2},
   review={\MR{2467561 (2009i:00002)}},
}
\bib{Harris}{article}{
   author={Harris, Pamela E.},
   title={On the adjoint representation of $\germ s\germ l_n$ and the
   Fibonacci numbers},
   language={English, with English and French summaries},
   journal={C. R. Math. Acad. Sci. Paris},
   volume={349},
   date={2011},
   number={17-18},
   pages={935--937},
   issn={1631-073X},
   review={\MR{2838238}},
   doi={10.1016/j.crma.2011.08.017},
}
\bib{Hesselink}{article}{
   author={Hesselink, Wim H.},
   title={Characters of the nullcone},
   journal={Math. Ann.},
   volume={252},
   date={1980},
   number={3},
   pages={179--182},
   issn={0025-5831},
   review={\MR{593631 (82c:17004)}},
   doi={10.1007/BF01420081},
}
\bib{Howe-Lee-1}{article}{
   author={Howe, Roger},
   author={Lee, Soo Teck},
   title={Bases for some reciprocity algebras. I},
   journal={Trans. Amer. Math. Soc.},
   volume={359},
   date={2007},
   number={9},
   pages={4359--4387},
   issn={0002-9947},
   review={\MR{2309189 (2008j:22017)}},
   doi={10.1090/S0002-9947-07-04142-6},
}
\bib{Howe-Lee-2}{article}{
   author={Howe, Roger},
   author={Lee, Soo Teck},
   title={Why should the Littlewood-Richardson rule be true?},
   journal={Bull. Amer. Math. Soc. (N.S.)},
   volume={49},
   date={2012},
   number={2},
   pages={187--236},
   issn={0273-0979},
   review={\MR{2888167}},
   doi={10.1090/S0273-0979-2011-01358-1},
}
\bib{HTW}{article}{
   author={Howe, Roger},
   author={Tan, Eng-Chye},
   author={Willenbring, Jeb F.},
   title={Stable branching rules for classical symmetric pairs},
   journal={Trans. Amer. Math. Soc.},
   volume={357},
   date={2005},
   number={4},
   pages={1601--1626},
   issn={0002-9947},
   review={\MR{2115378 (2005j:22007)}},
   doi={10.1090/S0002-9947-04-03722-5},
}
\bib{Kostant}{article}{
   author={Kostant, Bertram},
   title={Lie group representations on polynomial rings},
   journal={Amer. J. Math.},
   volume={85},
   date={1963},
   pages={327--404},
   issn={0002-9327},
   review={\MR{0158024 (28 \#1252)}},
}
\bib{Macdonald}{book}{
   author={Macdonald, I. G.},
   title={Symmetric functions and Hall polynomials},
   series={Oxford Mathematical Monographs},
   edition={2},
   note={With contributions by A. Zelevinsky;
   Oxford Science Publications},
   publisher={The Clarendon Press Oxford University Press},
   place={New York},
   date={1995},
   pages={x+475},
   isbn={0-19-853489-2},
   review={\MR{1354144 (96h:05207)}},
}
\bib{Procesi-AMS}{article}{
   author={Procesi, Claudio},
   title={The invariants of $n\times n$ matrices},
   journal={Bull. Amer. Math. Soc.},
   volume={82},
   date={1976},
   number={6},
   pages={891--892},
   issn={0002-9904},
   review={\MR{0419490 (54 \#7511)}},
}
\bib{Procesi-AIM}{article}{
   author={Procesi, C.},
   title={The invariant theory of $n\times n$ matrices},
   journal={Advances in Math.},
   volume={19},
   date={1976},
   number={3},
   pages={306--381},
   issn={0001-8708},
   review={\MR{0419491 (54 \#7512)}},
}
\bib{Stong}{article}{
   author={Stong, Richard},
   title={Some asymptotic results on finite vector spaces},
   journal={Adv. in Appl. Math.},
   volume={9},
   date={1988},
   number={2},
   pages={167--199},
   issn={0196-8858},
   review={\MR{937520 (89c:05007)}},
   doi={10.1016/0196-8858(88)90012-7},
}
\bib{Wallach-Willenbring}{article}{
   author={Wallach, N. R.},
   author={Willenbring, J.},
   title={On some $q$-analogs of a theorem of Kostant-Rallis},
   journal={Canad. J. Math.},
   volume={52},
   date={2000},
   number={2},
   pages={438--448},
   issn={0008-414X},
   review={\MR{1755786 (2001j:22020)}},
   doi={10.4153/CJM-2000-020-0},
}
\bib{Willenbring}{article}{
   author={Willenbring, Jeb F.},
   title={Stable Hilbert series of $S(\germ g)^\K$ for classical
   groups},
   journal={J. Algebra},
   volume={314},
   date={2007},
   number={2},
   pages={844--871},
   issn={0021-8693},
   review={\MR{2344587 (2008j:22021)}},
   doi={10.1016/j.jalgebra.2007.04.014}
}
\end{biblist}
\end{bibdiv}

\end{document}